\let\OLDenumerate\enumerate
\renewcommand\enumerate{\OLDenumerate\addtolength{\itemsep}{0.5ex}}
\definecolor{rosso}{RGB}{162,0,0}
\definecolor{verde}{RGB}{0,100,0}
\definecolor{blu}{RGB}{0,0,162}
\definecolor{bla}{RGB}{100,0,100}
\newcommand{\domark}{%
  \vbox to 0pt{
    \kern-\dp\strutbox
    \smash{\llap{*\kern1em}}
    \vss
  }%
}
\newcommand{\OBSOLETE}[1]{}
\newcommand{\MD}{\textsf{MD}}
\newcommand{\kk}{\mathbbm{k}} 
\newcommand{\QQ}{\mathbb{Q}}
\newcommand{\ZZ}{\mathbb{Z}}
\newcommand{\NN}{\mathbb{N}}
\newcommand{\PP}{\mathbb{P}} 
\newcommand{\stX}{\mathcal{X}} 
\newcommand{\stY}{\mathcal{Y}}
\newcommand{\stU}{\mathcal{U}} 
\newcommand{\stV}{\mathcal{V}}
\newcommand{\stZ}{\mathcal{Z}}
\newcommand{\BB}{\mathcal{B}} 
\newcommand{\can}{{\mathop{can}}}
\newcommand{\Xcan}{\stX^{\can}}
\newcommand{\stackquot}[2]{\left[ \raisebox{0.5ex}{$\displaystyle #1$} \middle/ \raisebox{-0.5ex}{$\displaystyle #2$} \right]}
\newcommand{\torP}{\mathcal{P}} 
\newcommand{\torQ}{\mathcal{Q}}
\newcommand{\gsR}{\mathcal{R}} 
\newcommand{\gsS}{\mathcal{S}}
\newcommand{\gsI}{\mathcal{I}} 
\newcommand{\famF}{\mathcal{F}} 
\newcommand{\famG}{\mathcal{G}}
\newcommand{\shL}{\mathcal{L}} 
\newcommand{\shF}{\mathcal{F}} 
\newcommand{\shG}{\mathcal{G}} 
\newcommand{\OO}{\mathcal{O}} 
\newcommand{\agG}{\mathsf{G}} 
\newcommand{\agH}{\mathsf{H}} 
\newcommand{\D}[1]{\mathsf{D}(#1)}
\newcommand{\Gm}{\mathbb{G}_m} 
\DeclareMathOperator{\Spec}{Spec}
\let\div\relax
\DeclareMathOperator{\div}{div}
\DeclareMathOperator{\Pic}{Pic}
\DeclareMathOperator{\PPic}{\overline{\Pic}}
\DeclareMathOperator{\Refl}{Ref}
\DeclareMathOperator{\RRef}{\overline{\Refl}}
\DeclareMathOperator{\WDiv}{WDiv}
\DeclareMathOperator{\Cl}{Cl}
\newcommand{\RS}{\mathcal{R}} 
\newcommand{\RR}{\mathsf{R}} 
\newcommand{\PicRS}{\RS_{\Pic}}
\newcommand{\PicRR}{\RR_{\Pic}}
\newcommand{\RefRS}{\RS_{\Refl}}
\newcommand{\RefRR}{\RR_{\Refl}}
\renewcommand{\SS}{\mathcal{S}} 
\newcommand{\SR}{\mathsf{S}} 
\DeclareMathOperator{\Sch}{Sch}
\DeclareMathOperator{\AbGrps}{AbGrps}
\DeclareMathOperator{\Grps}{Grps}
\DeclareMathOperator{\Algs}{Algs}
\DeclareMathOperator{\QCoh}{QCoh}
\DeclareMathOperator{\Alg}{AlgQCoh}
\mathchardef\mhyphen="2D
\newcommand{\mhyph}{\mathbin{\mhyphen}}
\DeclareMathOperator{\GrAlg}{Gr-AlgQCoh}
\DeclareMathOperator{\Aff}{Aff}
\DeclareMathOperator{\Hom}{Hom}
\DeclareMathOperator{\SHom}{\mathcal{Hom}} 
\DeclareMathOperator{\Ext}{Ext}
\DeclareMathOperator{\id}{id}
\newcommand{\twovee}{^{\vee\vee}} 
\newcommand{\coloneqq}{\mathrel{\mathop:}=}
\newcommand{\eqqcolon}{=\mathrel{\mathop:}}
\newcommand{\xto}{\xrightarrow}
\newcommand{\into}{\hookrightarrow}
\newcommand{\onto}{\twoheadrightarrow}
\newcommand{\isom}{\xrightarrow{\raisebox{-0.5ex}{$\scriptstyle\sim$}}}
\newcommand{\blank}{\,\text{--}\,} 
\theoremstyle{plain}
\newtheorem{theorem}{Theorem}[section]
\newtheorem*{theorem*}{Theorem}
\newtheorem*{lemma*}{Lemma}
\theoremstyle{definition}
\newtheorem*{example*}{Example}
\numberwithin{equation}{section}
\newcommand{\bib}[5]{{\bibitem{#1} #2, {\emph{#3},} #4#5.}}
\newcommand{\arXiv}[1]{{\href{http://arxiv.org/abs/#1}{\texttt{arXiv:#1}}}}
\theoremstyle{plain}
  \newtheorem{maintheorem}{Theorem}
  \newaliascnt{proposition}{theorem}
  \newtheorem{proposition}[proposition]{Proposition}
  \newaliascnt{lemma}{theorem}
  \newtheorem{lemma}[lemma]{Lemma}
  \newaliascnt{corollary}{theorem}
  \newtheorem{corollary}[corollary]{Corollary}
\theoremstyle{definition}
  \newaliascnt{definition}{theorem}
  \newtheorem{definition}[definition]{Definition}
  \newaliascnt{remark}{theorem}
  \newtheorem{remark}[remark]{Remark}
  \newaliascnt{example}{theorem}
  \newtheorem{example}[example]{Example}
\title[Cox rings of stacks]{Cox rings of algebraic stacks}
\author[A. Hochenegger]{Andreas Hochenegger}
\address{Dipartimento di Matematica ``Francesco Brioschi'', Politecnico di Milano, via Bonardi 9, 20133 Milano, Italy}
\email{andreas.hochenegger@polimi.it}
\author[E. Martinengo]{Elena Martinengo}
\address{Dipartimento di Matematica ``Giuseppe Peano'', Universit\`a degli Studi di Torino, via Carlo Alberto 10, 10123 Torino, Italy}
\email{elena.martinengo@unito.it}
\author[F. Tonini]{Fabio Tonini}
\address{Dipartimento di Matematica e Informatica ``Ulisse Dini'', Universit\`a degli Studi di Firenze, Viale Morgagni 67, 50134 Firenze, Italy}
\email{jacobbb84@gmail.com}
\begin{document}

\maketitle

\begin{abstract}
We give a proper definition of the multiplicative structure of the following rings: the Cox ring of invertible sheaves on a general algebraic stack; and the Cox ring of rank one reflexive sheaves on a normal and excellent algebraic stack.

\noindent
We show that such Cox rings always exist and establish their \mbox{(non-)}\allowbreak unique\-ness in terms of an $\Ext$-group.
Moreover, we compare our definition with the classical construction of a Cox ring on a variety.
Finally, we give an application to the theory of Mori dream stacks.
\end{abstract}

\section{Introduction}

Let $\stX$ be an algebraic stack, e.g. a variety over some field.
The \emph{Cox ring of line bundles} on $\stX$ as an $H^0(\stX,\OO)$-module is 
\[
\PicRR(\stX) =\ \bigoplus_{\mathclap{\shL \in \Pic(\stX)}}\; H^0(\stX,\shL).
\] 
As in the case of varieties, the question about the multiplicative structure is delicate, and in the articles \cite{HM1,HM2} the first and second named authors glossed over this technicality
when speaking about the Cox ring of algebraic stacks.

The purpose of the paper is to give a very general definition of a Cox ring, that is a multiplicative structure on $\PicRR(\stX)$. 
The following main theorem summarises our results; see \autoref{thm:mainthm-pic}.

\begin{maintheorem}\label{thm:main theorem line bundles}
Let $\stX$ be an algebraic stack. 
Then a Cox ring of line bundles on $\stX$, also called a $\Pic$-Cox ring $\PicRR(\stX)$, exists.
Moreover, a $\Pic$-Cox ring is unique up to isomorphism if
\[
\Ext^1(\Pic(\stX),H^0(\OO_\stX^*)) = 0,
\]
which, in particular, holds in the cases that
\begin{itemize} 
\item $\Pic(\stX)$ is free; or 
\item $H^0(\OO_\stX^*) = \kk^*$ for an algebraically closed field $\kk$.
\end{itemize}
\end{maintheorem}

In the case that $\stX$ is a noetherian, normal and excellent algebraic stack, 
it makes sense to speak about the Cox ring of reflexive sheaves of rank one. 
We denote the group of isomorphism classes of such sheaves by $\Refl_1(\stX)$. We obtain an analogous statement for the Cox ring of such sheaves; see \autoref{prop:picref}.

\begin{maintheorem}
If $\stX$ is a noetherian, normal and excellent algebraic stack then a  Cox ring of reflexive sheaves of rank one
\[
\RefRR(X) = \bigoplus_{\mathclap{\shL \in \Refl_1(\stX)}}\; H^0(\stX,\shL),
\] 
also called a $\Refl_1$-Cox ring, exists.
Moreover, a $\Refl_1$-Cox ring is unique up to isomorphism if
\[
\Ext^1(\Refl_1(\stX),H^0(\OO_\stX^*)) = 0,
\]
which, in particular, holds in the cases that
\begin{itemize} 
\item $\Refl_1(\stX)$ is free; or 
\item $H^0(\OO_\stX^*) = \kk^*$ for an algebraically closed field $\kk$.
\end{itemize}
\end{maintheorem}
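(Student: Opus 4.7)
The plan is to mirror the proof of Theorem A. What that proof really establishes is an abstract classification: for any group $G$ whose elements parametrise isomorphism classes of sheaves on $\stX$ equipped with an associative commutative product $\star$ compatible with the group law, and such that every representative $\shL$ has automorphism group canonically identified with $H^0(\OO_\stX^*)$, the set of isomorphism classes of Cox rings is in bijection with $\Ext^1(G, H^0(\OO_\stX^*))$. It therefore suffices to verify that $\Ref_1(\stX)$, with the reflexive tensor product $\shF \star \shG \coloneqq (\shF \otimes \shG)\twovee$, satisfies these hypotheses.

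First I would check that $(\Ref_1(\stX), \star)$ is a well-defined abelian group, with neutral element $\OO_\stX$ and inverse $\shF \mapsto \shF^\vee$. This reduces to standard facts about reflexive sheaves on normal noetherian schemes, which transfer to the stack setting after an \'etale cover. The multiplication on cohomology is given, without any auxiliary choice, by the natural composition
\[
H^0(\shF) \otimes H^0(\shG) \to H^0(\shF \otimes \shG) \to H^0\bigl((\shF \otimes \shG)\twovee\bigr) = H^0(\shF \star \shG).
\]

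Next, I would establish that $\operatorname{Aut}_{\OO_\stX}(\shL) = H^0(\OO_\stX^*)$ for every $\shL \in \Ref_1(\stX)$. Since $\shL$ is reflexive of rank one, there is a big open substack $\stU \subseteq \stX$ on which $\shL$ is invertible and whose complement has codimension at least two, so $\SHom(\shL, \shL)|_\stU \cong \OO_\stU$. The sheaf $\SHom(\shL, \shL)$ is itself reflexive, and by the $S_2$ property of the normal stack $\stX$ it equals $\OO_\stX$ globally; taking global sections finishes the identification. This is the key geometric input and is where normality and excellence are actually used.

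Once these ingredients are in place, the remainder is formally identical to Theorem A. One chooses a set-theoretic section $s \colon \Ref_1(\stX) \to \{\text{representatives}\}$, transports the multiplications through comparison isomorphisms $s(\alpha) \star s(\beta) \isom s(\alpha + \beta)$, and observes that associativity is encoded by a normalised 2-cocycle with values in $H^0(\OO_\stX^*)$; the resulting cohomology class lives in $H^2(\Ref_1(\stX), H^0(\OO_\stX^*)) \cong \Ext^1(\Ref_1(\stX), H^0(\OO_\stX^*))$. Different sections produce cohomologous cocycles, so isomorphism classes of Cox rings biject with this Ext group; the trivial class provides existence, and the two sufficient conditions for uniqueness follow from freeness of $\Ref_1(\stX)$ or from divisibility (hence injectivity as an abelian group) of $\kk^*$ for $\kk$ algebraically closed. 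The main obstacle is the automorphism identification of the previous paragraph, which must be carried out intrinsically on the stack rather than by naively reducing to schemes.
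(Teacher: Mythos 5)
Your strategy is genuinely different from the paper's, and as written it has real gaps. The paper does not re-run the classification argument for reflexive sheaves at all: it proves (\autoref{lem:restrictsmooth} and \autoref{cor:restriction}) that restriction to the regular locus $j\colon\stU\into\stX$ induces an equivalence between $G$-families of reflexive rank-one sheaves on $\stX$ and $G$-families of line bundles on $\stU$, with quasi-inverse $j_*$; since $\Ref_1(\stX)\simeq\Pic(\stU)$ and $H^0(\OO_\stX^*)=H^0(\OO_\stU^*)$, Theorem A applied to $\stU$ then yields Theorem B with no further computation. You instead propose to work directly on $\stX$ with the product $\shF\star\shG=(\shF\otimes\shG)\twovee$.

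The first gap is your premise that the proof of Theorem A ``really establishes'' an abstract classification applicable verbatim to $(\Ref_1(\stX),\star)$. The existence half of that proof is not a cocycle argument: it identifies $G$-families of line bundles with $\D{G}$-torsors (\autoref{rem:torsor-families}) and then uses free resolutions together with \autoref{lem:induced-family}, whose proof is pure torsor theory. For non-invertible reflexive sheaves the Cox sheaf is not faithfully flat, so this torsor interpretation is unavailable; both the lifting-from-free-groups step (\autoref{exa:free-families}, which for $\star$ amounts to a coherence statement for the reflexive tensor product) and the induced-family step would have to be re-proved, and the natural way to do that is precisely to restrict to $\stU$. Second, your cocycle bookkeeping is off: $H^2(G,A)$ classifies central extensions and is not isomorphic to $\Ext^1(G,A)$ in general (already for $G=(\ZZ/2)^2$); you need the commutativity constraint to cut down to symmetric $2$-cocycles, for which $H^2_{\mathrm{sym}}(G,A)\cong\Ext^1(G,A)$. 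Third, ``the trivial class provides existence'' is circular: the set of liftings is a torsor under $\Ext^1$ only once it is known to be nonempty, and in your setup nonemptiness requires killing an associativity obstruction --- a symmetric $3$-cocycle whose class lies in $\Ext^2(\Ref_1(\stX),H^0(\OO_\stX^*))=0$ --- a step you never mention. Your identification $\Aut(\shL)=H^0(\OO_\stX^*)$ via reflexivity of $\SHom(\shL,\shL)$ is correct, but it is not the crux; the crux is controlling the whole family at once, and the paper's codimension-two reduction does that in one stroke.
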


Under these conditions on $\stX$, we show this theorem by restricting to the regular locus $\stU \subseteq \stX$, which induces an isomorphism of $\Refl_1$-Cox rings of $\stX$ and $\Pic$-Cox rings of $\stU$.

We note that our construction does not need the Picard group (or the group of reflexive sheaves of rank one) to be finitely generated, although also the classical construction in \cite{Coxrings} does not really rely on it.
Additionally the classical construction assumes that $H^0(\OO_\stX^*) = \kk^*$ for an algebraically closed field $\kk$. We see here clearly that this becomes important for the uniqueness of the Cox ring, and not for its existence.

Another point to remark is that no kind of regularity is needed on $\stX$ to state and prove the results about the $\Pic$-Cox ring. Actually in this paper we will not even assume that $\stX$ is an algebraic stack, just a category fibered in groupoids admitting an fpqc atlas, called a \emph{pseudo-algebraic fibered category} (see \autoref{def:pseudo-algebraic}). 

There is an interesting case where we can use this greater generality of the construction developed here, namely in the case of the infinite root stack associated to a logarithmic scheme (see \autoref{ex:logarithm} for details).
A priori it is not clear what kind of information about the logarithmic structure can be obtained from its Cox ring, but we plan to investigate this further.

The main ingredient in the proof of the above results and in the study of multiplicative structures on $\RR(X)$ is the notion of a \emph{family of sheaves}: if $G$ is an abelian group then a $G$-family of sheaves is a collection of quasi-coherent sheaves $(\famF_g)_{g\in G}$ together with morphisms 
\[
\OO_\stX\to \famF_0 \text{ and } \famF_g\otimes \famF_{g'}\to \famF_{g+g'}
\]
satisfying certain compatibility conditions (see \autoref{def:family-sheaves}). With a $G$-family $\famF$, one can associate
\[
\RS_\famF=\bigoplus_g \famF_g \text{ and } \RR_\famF=\bigoplus_g H^0(\stX,\famF_g)
\]
which we call the \emph{Cox sheaf} and \emph{Cox ring} of $\famF$ and which are a sheaf of $\OO_\stX$-algebras and a $H^0(\OO_\stX)$-algebra, respectively.

The Ext-groups in the two theorems above parametrize the possible ways to put a structure of a family of sheaves on the collections $(\shL)_{\shL\in \Pic(\stX)}$ and $(\shF)_{\shF\in \Refl_1(\stX)}$, respectively.

In particular, if $\stX$ is a quasi-compact algebraic stack, there exists a $\Pic$-Cox sheaf of algebras 
\[
\PicRS(\stX)=\bigoplus_{\shL\in \Pic(\stX)}\shL
\]
such that $H^0(\PicRS(\stX))=\PicRR(\stX)$. Moreover, if $\stX$ is a Noetherian, normal and excellent algebraic stack, there exists a $\Refl_1$-Cox sheaf of algebras 
\[
\RefRS(\stX)=\bigoplus_{\shF\in \Refl_1(\stX)}\shF
\]
such that $H^0(\RefRS(\stX))=\RefRR(\stX)$.

The crucial point is that a $G$-family of line bundles can be thought of as a torsor under the dual diagonalisable group scheme $\D G$. All the results are then consequence of constructions and known results about torsors.

\medskip 
Finally, as an application, we obtain the following result for a Mori dream space (see \autoref{thm:piccan} and \autoref{rem:generaliseHM2}):

\begin{maintheorem}
Let $X$ be a Mori dream space over an algebraically closed field $\kk$ of characteristic zero.
Then 
\[
\Xcan=\stackquot{\Spec_X \RefRS(X)}{\D{\Cl(X)}}.
\]
is a Mori dream quotient stack in the sense of \cite{HM2}, that is,
\[
\Xcan=\stackquot{\Spec \PicRR(\Xcan) \setminus V}{\D{\Pic(\Xcan)}},
\]
where 
$V$ is locus defined by the \emph{irrelevant ideal}.

Moreover, let $\pi \colon \Xcan \to X$ the structure map. Then
\[
\Refl_1(\Xcan)=\Pic(\Xcan)=\Cl(X) \text{ and } \pi_*\PicRS(\Xcan) \simeq \RefRS(X)
\]
so that $\PicRR(\Xcan)=\RefRR(X)$. 
\end{maintheorem}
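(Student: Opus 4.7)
The plan is to analyze the canonical stack $\Xcan=\stackquot{Y}{G}$ with $Y=\Spec_X\RefRS(X)$ and $G=\D{\Cl(X)}$, compute its groups of line bundles and rank-one reflexive sheaves, and then exploit the Mori dream property of $X$ to exhibit $\Xcan$ in the required quotient form. The $\Cl(X)$-grading on $\RefRS(X)$ is exactly the datum of the $G$-action on $Y$, so by descent quasi-coherent sheaves on $\Xcan$ correspond to $\Cl(X)$-graded quasi-coherent $\OO_Y$-modules. In particular, if $\pi\colon\Xcan\to X$ denotes the structure map, then $\pi_*\OO_{\Xcan}\simeq\RefRS(X)$ as $G$-equivariant $\OO_X$-algebras, where the weight-$g$ component is the reflexive rank-one sheaf $\shF_g$ of class $g\in\Cl(X)$.

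Next, I would identify $\Pic(\Xcan)$ with $\Cl(X)$. Characters of $G$ produce equivariant line bundles on $Y$, giving an injection $\Cl(X)\into\Pic(\Xcan)$; surjectivity follows by pushing a line bundle on $\Xcan$ down to a rank-one reflexive sheaf on $X$ and reading off its class in $\Cl(X)$, then recovering the original bundle as the corresponding character twist. Since $\Xcan$ is the canonical stack on which rank-one reflexive sheaves become invertible, one also gets $\Ref_1(\Xcan)=\Pic(\Xcan)=\Cl(X)$. With these identifications, the $\Pic$-Cox sheaf $\PicRS(\Xcan)$ constructed in the earlier sections of the paper pushes forward along $\pi$ to $\bigoplus_{g\in\Cl(X)}\shF_g=\RefRS(X)$; taking global sections and using that $\pi_*$ commutes with them yields $\PicRR(\Xcan)\simeq\RefRR(X)$.

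Finally, to obtain the Mori dream quotient presentation, I would invoke that $X$ is an MDS: the ring $\RefRR(X)$ is a finitely generated $\kk$-algebra, so $\Spec\RefRR(X)$ is an affine variety carrying a $G$-action coming from the $\Cl(X)$-grading, and the relative spectrum $Y=\Spec_X\RefRS(X)$ is precisely the open complement $\Spec\RefRR(X)\setminus V$, where $V$ is cut out by the irrelevant ideal. Substituting $\PicRR(\Xcan)=\RefRR(X)$ and $\Pic(\Xcan)=\Cl(X)$ into $\Xcan=\stackquot{Y}{G}$ gives the desired form $\Xcan=\stackquot{\Spec\PicRR(\Xcan)\setminus V}{\D{\Pic(\Xcan)}}$.

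The main obstacle is the identification $\Ref_1(\Xcan)=\Pic(\Xcan)=\Cl(X)$: both equalities rely on a careful local analysis of $\Xcan$ over the singular locus of $X$, where the $G$-action on $Y$ ceases to be free and $\pi$ acquires nontrivial stabilizers, together with a precise use of the universal property of the canonical stack as the minimal stacky cover on which rank-one reflexive sheaves become invertible.
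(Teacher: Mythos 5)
Your proposal reaches the right endpoints and the overall architecture (identify $\Pic(\Xcan)$ with $\Cl(X)$, push the $\Pic$-Cox sheaf down to the $\Ref_1$-Cox sheaf, then use finite generation of $\RefRR(X)$ to pass to the global spectrum minus the irrelevant locus) matches the paper. But the step you yourself flag as ``the main obstacle'' --- the identification $\Ref_1(\Xcan)=\Pic(\Xcan)=\Cl(X)$ --- is not actually argued, and the mechanism you gesture at is not available. You appeal to ``the universal property of the canonical stack as the minimal stacky cover on which rank-one reflexive sheaves become invertible,'' but $\Xcan$ is \emph{defined} here as the quotient $[\Spec_X\RefRS(X)/\D{\Cl(X)}]$, not characterized by such a universal property; indeed the paper explicitly gives an example (the Klein--Nagata quadric cone) where $\Xcan=X$ is singular, so $\Xcan$ is not the canonical stack in the sense of Fantechi--Mann--Nironi and no such minimality can be invoked. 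The paper's actual route is: choose a free resolution $0\to K_1\to K_0\to\Cl(X)\to 0$, use \autoref{prop:superwichtig} (via \autoref{thm:rs}) to re-present $\Xcan\simeq[\Spec_X\SS(X)/\D{K_0}]$ where $\SS(X)$ is the sheaf of divisorial algebras, prove that $\SR(X)$ is a UFD and hence that the atlas $\Spec_X\SS(X)$ is locally factorial (\autoref{lem:Xcan-regular}), and then descend invertibility of rank-one reflexive sheaves along this smooth atlas. Without some substitute for this, $\Ref_1(\Xcan)=\Pic(\Xcan)$ is unproved.

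A second, related gap: your surjectivity argument for $\Cl(X)\to\Pic(\Xcan)$ (``push a line bundle down and recover it as a character twist'') silently assumes that $\pi_*$ and $(\pi^*(\blank))^{\vee\vee}$ are inverse equivalences on rank-one reflexive sheaves. The paper proves this by restricting to the regular locus $U\subseteq X$, over which $\pi$ is an isomorphism, and applying \autoref{lem:restrictsmooth}; this requires knowing that the complement of $\pi^{-1}(U)$ in $\Xcan$ has codimension at least $2$, which is again extracted from the atlas $\Spec_X\SS(X)$ via \cite[Prop.\ 1.3.2.8]{Coxrings}. Neither codimension bound appears in your sketch. Finally, the assertion that $\Spec_X\RefRS(X)$ is the complement of the irrelevant locus in $\Spec\RefRR(X)$ does not follow merely from finite generation; it is \cite[Prop.\ 1.6.3.3]{Coxrings} (quasi-affineness of the relative spectrum plus the codimension statement), which the paper cites explicitly. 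So the proposal is a correct outline of \emph{what} must be shown, but the two load-bearing technical inputs --- the locally factorial presentation via the divisorial algebra and the codimension-$2$ comparisons --- are missing.
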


This generalises \cite[Prop. 2.9]{HM2} where we used the additional assumption that $H^0(\Spec \RefRR(X),\OO^*) = \kk^*$ and $\Pic(\Spec \RefRR(X)) = 0$. We point out that, in \cite{HM1} and \cite{HM2}, $\Xcan$ is the starting point to build Mori dream stacks by so-called root constructions.
Consequently, this assumption can be dropped in further results there, see \autoref{rem:generaliseHM2}.

In \cite{Derenthal-etal}, the authors consider a different notion of Cox rings for varieties over a field $\kk$. 
The main difference is that this is a relative notion, in the sense that it depends on the base field $\kk$, while our definition is absolute. Starting from $X/\kk$ and denoting by $G$ the absolute Galois group of $\kk$, they consider $G$-equivariant maps $\lambda\colon M\to \Pic(\overline X)$, where $M$ is a $G$-module and $\overline X$ is the base change of $X$ to a separable closure of $\kk$, and Cox sheaves $\RS$ of type $\lambda$ (see \cite[Defs. 2.2 \& 3.1]{Derenthal-etal}). 
When $\kk$ is separably closed, such an object coincides with an $M$-family of line bundles over $X$. Compare also \cite[Thm. 1.1]{Derenthal-etal} and \autoref{rem:torsor-families}.  We expect that several constructions and results in  \cite{Derenthal-etal} can be extended to stacks and it is our plan to investigate this further.

In a similar, relative setting, a classification of torsors under groups of multiplicative type analogous to \autoref{thm:main theorem line bundles} via \autoref{rem:torsor-families} is given in \cite[Thm. 1.5.1]{CTS}.

The paper is divided as follows. In \autoref{sec:families-torsors}, we give a very general definition of a Cox ring, introducing the language of $G$-families of sheaves, where $G$ is an abelian group.
In the case of families of line bundles, this notion is equivalent to torsors, see \autoref{sec:torsors}. 
\autoref{lem:induced-family} is the key lemma of our construction: it gives a criterion for a $G$-family of line bundles to induce a family of a quotient $G/H$.
We apply this proposition in \autoref{sec:coxrings}, where we first show the existence of a Cox ring of line bundles using a free resolution of the Picard group.
Moreover, we measure the (non-)uniqueness of Cox rings with an $\Ext$-group.
This is the content of \autoref{thm:families-extension}.
In the case that it makes sense to speak about reflexive sheaves of rank one, 
we obtain the same result, see \autoref{sec:refcox}.
In \autoref{sec:confrontation}, we compare our construction with the classical one given in \cite{Coxrings}. 
The upshot is that our key lemma is already present there.
Finally, in \autoref{sec:application} we give the application to Mori dream spaces.

\subsection*{Acknowledgements}
We thank Ariyan Javanpeykar and Ronan Terpereau for the discussions together at the workshop \emph{Stacks in Turin} at the University of Turin, September 2017, 
which were a reason for writing this article.
We also thank Marta Pieropan for making us aware of the articles \cite{CTS, Derenthal-etal}.
Finally we thank Angelo Vistoli for pointing out \autoref{ex:nonunique:unique} to us.

\section{Families of sheaves, torsors and their Cox rings}
\label{sec:families-torsors}

\begin{definition}\cite[Definition 1.2]{ToniniSheaf}
\label{def:pseudo-algebraic}
A category fibered in groupoids $\stX$ is called \emph{pseudo-algebraic} if it admits an fpqc atlas $U\to \stX$ from a scheme $U$, that is a map such that, for all maps $V\to \stX$ from a scheme, the base change $V\times_\stX U\to V$ is an fpqc covering of algebraic spaces.

Moreover $\stX$ is called \emph{quasi-compact} if it admits an fpqc atlas from a quasi-compact scheme. 
\end{definition}

For pseudo-algebraic fibered categories, the notions of quasi-coherent sheaves, vector bundles, line bundles, their pullbacks and pushforwards are well defined and well behaved. 
For an introduction to the subject, we refer to \cite[\S 4.1]{TalpoVistoli} or \cite[\S 1]{ToniniSheaf}.

The idea is that a quasi-coherent sheaf on a pseudo-algebraic fibered category $\stX$ assigns in a compatible way to any object of $\stX$ over a scheme $S$ a quasi-coherent sheaf on $S$. Therefore $\stX$ can be thought of as a sort of parameter space for quasi-coherent sheaves.

Note that this notion of sheaves extends the classical one for schemes and algebraic stacks. The pseudo-algebraic condition guarantees that the category $\QCoh(\stX)$ of quasi-coherent sheaves is an abelian category (see \cite[Proposition 1.4]{ToniniSheaf} and other nice properties.

\medskip
In this section we consider a pseudo-algebraic fibered category $\stX$, for example a variety or an algebraic stack.
We describe a general procedure that associates with a family of sheaves on $\stX$ with certain properties an $H^0(\OO_\stX)$-algebra of sections of them, that we call the \emph{Cox ring} associated with the family.
Moreover, we show existence of Cox rings of line bundles using the language of torsors and we discuss their uniqueness. Finally, we extend these results to Cox rings of reflexive sheaves of rank 1.

\subsection{Families of sheaves}
\label{sec:families}

\begin{definition}
\label{def:family-sheaves}
Let $G$ be an abelian group. A \emph{$G$-family of sheaves} $\famF$ on $\stX$ is a collection of quasi-coherent and finitely presented sheaves $\famF_g$ on $\stX$ for $g \in G$ together with
\begin{itemize}
\item an isomorphism $\xi_0\colon \OO_\stX \to \famF_0$ and
\item bilinear morphisms $\xi_{g,g'}\colon \famF_g \times \famF_{g'} \to \famF_{g+g'}$ for  all $g, g'\in G$,
such that the morphisms
\[ 
\omega_{g,g'}\colon \famF_g \to \SHom_{\stX}(\famF_{g'}, \famF_{g+g'}), \quad x \mapsto \xi_{g,g'}(x,\blank)
\]
are isomorphisms,
\end{itemize}
We require that the following diagrams are commutative for all $g,g',g''\in G$:
\begin{itemize}
\item ``associativity'':
\[
\begin{tikzcd}
\famF_g\times \famF_{g'}\times \famF_{g''}  \ar[r, "\xi_{g,g'}\times \id"] \ar[d, "\id \times \xi_{g',g''}"'] & \famF_{g+g'}\times \famF_{g''} \ar[d, "\xi_{g+g',g''}"] \\
\famF_g\times \famF_{g'+g''}  \ar[r, "\xi_{g,g'+g''}"] & \famF_{g+g'+g''} 
\end{tikzcd}
\]
\item ``commutativity'':
\[
\begin{tikzcd}
\famF_g\times \famF_{g'} \ar[r, "\text{swap}"] \ar[d, "\xi_{g,g'}"'] & \famF_{g'}\times \famF_{g} \ar[d, "\xi_{g',g}"] \\
\famF_{g+g'} \ar[r, "\id"] & \famF_{g'+g}  
\end{tikzcd}
\]
\item ``unit'':
\[
\begin{tikzcd}[column sep=small]
\famF_g \ar[rr, "\id"] \ar[dr, "\id\times\xi_0"'] && \famF_g \\
& \famF_g \times \famF_0 \ar[ur, "\xi_{g,0}"']\\
\end{tikzcd}
\]
\end{itemize}
We denote a $G$-family by $\famF=(\famF,\xi)$ or by $\famF\colon G\to \QCoh(\stX)$.
\end{definition}

\begin{remark}
In \autoref{def:family-sheaves} one can also directly use morphisms $\xi_{g,g'}\colon \famF_g \otimes \famF_g'\to \famF_{g+g'}$ and change the commutative diagrams accordingly.
\end{remark}

\begin{remark}
\label{rem:reflexive-torsor}
Let $\famF$ be a $G$-family of sheaves on $\stX$. 
Note that
\[
\begin{tikzcd}
\famF_g \ar[r, "\omega_{g,-g}"] & \SHom(\famF_{-g},\famF_0) \ar[rr, "{\SHom(-,\xi_0^{-1})}"] && \SHom(\famF_{-g},\OO_\stX) = \famF_{-g}^\vee
\end{tikzcd}
\]
is an isomorphism and the composition $\famF_g \to \famF_{-g}^\vee \to \famF_g\twovee$ is the bidual map.
Thus, all the sheaves $\famF_g$ are reflexive.

If $\famF_h$ is invertible for some $h\in G$ then the maps $\xi_{g,h}$ induce isomorphisms $\famF_g \otimes \famF_h \to \famF_{g+h}$ for all $g\in G$.
\end{remark}

\begin{definition}
Let $\famF$ be a $G$-family and $\famG$ an $H$-family together with a group homomorphism $\alpha \colon G \to H$. 
A \emph{morphism of families $\phi \colon \famF\to\famG$ with respect to $\alpha$} consists of $\{\phi_g \colon \famF_g \to \famG_{\alpha(g)}\}_{g\in G}$ such that the following diagrams commute for all $g,g' \in G$:
\[
\begin{tikzcd}[column sep=small]
& \OO_{\stX} \ar[dl, "\xi^{\famF}_0"'] \ar[dr, "\xi^{\famG}_0"] \\
\famF_0 \ar[rr, "\phi_0"] && \famG_0
\end{tikzcd}
\quad
\begin{tikzcd}
\famF_g \times \famF_{g'} \ar[r, "\phi_g \times \phi_{g'}"] \ar[d, "\xi^{\famF}_{g,g'}"'] & \famG_{\alpha(g)} \times \famG_{\alpha(g')} \ar[d, "\xi^{\famG}_{\alpha(g),\alpha(g')}"]\\
\famF_{g+g'} \ar[r, "\phi_{g+g'}"] & \famG_{\alpha(g+g')}
\end{tikzcd}
\]
One can check that such a $\phi\colon \famF \to \famG$ is also compatible with the remaining three diagrams in the definition of a $G$-family.

We call a morphism of families $\phi \colon \famF \to \famG$ with respect to $\alpha=\id_G$ just a \emph{morphism of $G$-families}.
\end{definition}

The definition above allows us to speak of the category of families on a given space $\stX$.

\begin{definition}
Let $G$ be an abelian group and $\famF$ be a $G$-family of sheaves. The $H^0(\OO_\stX)$-module 
\[ 
\RR_\famF \coloneqq \bigoplus_{g\in G} H^0(\famF_g)
\]
is called the \emph{Cox ring} associated with $\famF$.
The ring structure is given by
\[ 
H^0(\famF_g) \times H^0(\famF_{g'}) \xto{H^0(\xi_{g,g'})} H^0(\famF_{g+g'}) 
\]
which turns $\RR_\famF$ into a commutative $G$-graded $H^0(\OO_\stX)$-algebra with structure morphism  $H^0(\OO_\stX)\xto{\xi_0} H^0(\famF_0).$
\end{definition}

\begin{definition}
\label{def:cox sheaf}
We define the \emph{Cox sheaf of algebras} as
\[
\RS_\famF = \bigoplus_{g\in G} \famF_g
\]
using the $G$-family structure. 
\end{definition}

\begin{remark}
There is always a morphism of $H^0(\OO_\stX)$-algebras
\[
\RR_\famF\to H^0(\RS_\famF)
\]
and it is an isomorphism if $G$ is finite or $\stX$ is quasi-compact, that is it has an fpqc atlas $U\to \stX$ from a quasi-compact scheme $U$.
\end{remark}

\begin{remark}
\label{rem:pullback-families}
If $f\colon\stY \to \stX$  is a morphism of fibered categories then 
$f^*\famF = (f^*\famF_g, f^*\xi_0,f^*\xi_{g,g'})_{g,g'\in G}$ is in general not a $G$-family: 
the problem is that $f^*\famF_u\to \SHom_\stY(f^*\famF_v, f^*\famF_{u+v})$ may fail to be an isomorphism. 

If all the sheaves involved are invertible or the map $f\colon\stY \to \stX$ is flat, then $f^*\famF$ is a $G$-family of sheaves on $\stY$ and we have a $G$-graded $H^0(\OO_\stX)$-linear map $\RR_\famF \to \RR_{f^*\famF}$.
\end{remark}

\begin{remark}
The assignment $\famF \mapsto \RR_\famF$ mapping a family $\famF$ to its Cox ring is functorial, i.e.\ given a morphism of families $\phi \colon \famF \to \famG$ with respect to a group homomorphism $\alpha$, there is an induced ring homomorphism $\RR_\phi \colon \RR_\famF \to \RR_\famG$.
\end{remark}

\subsection{Generalities on torsors}
\label{sec:torsors}

We recall here the classical definition of torsor over a scheme and the (less classical) definition of torsors over a fibered category.

\begin{definition}
Let $R$ be a ring and $\agG$ be an affine group scheme over $R$  
such that $\agG \to \Spec R$ is flat (hence faithfully flat). Let $X$ be a scheme over $R$.
A \emph{$\agG$-torsor} over $X$ is a faithfully flat morphism $P \to X$
with an action of $\agG$ on $P$ as a scheme over $X$ such that it is locally trivial in the fpqc-topology.

We denote by with $\BB_R \agG$, or just $\BB \agG$ for short, the stack of $\agG$-torsors over $R$.
\end{definition}

\begin{remark}
The local triviality means that  there exists a fpqc-covering $\{U\to X\}_{U \in \mathfrak U}$ of $X$ and a $\agG$-equivariant isomorphism
\[
P_{|U} \simeq \agG \times U, \quad \forall U \in \mathfrak U.\]
It is equivalent to $\agG \times P \to P \times_X P, (h,p) \mapsto (hp,p)$ being an isomorphism.
\end{remark}

\begin{definition}
Let $\agG$ be a flat and affine group scheme over $R$.
Let $\stX$ be a category fibered in groupoids over $R$.
A \emph{$\agG$-torsor} over $\stX$ is a morphism $\torP \to \stX$ of categories fibered in groupoids over $R$ such that, for all morphisms $X \to \stX$ from a scheme, the base change $\torP\times_\stX X\to X$ is a $\agG$-torsor over $X$ and all those torsors are compatible in the obvious way. 
\end{definition}

Notice that, since we are restricting ourselves to affine group schemes, torsors are affine maps.

\begin{remark}
A $\agG$-torsor over $\stX$ is the same as a morphism of fibered categories $\stX\to \BB_R \agG$. In this case $\torP \to \stX$ is the base change along the trivial torsor $\Spec R\to \BB_R \agG$.
\end{remark}

\begin{remark}
If $\stX$ is an algebraic stack over $R$ (resp. a scheme, a pseudo-algebraic fibered category) then so is the total space of a $\agG$-torsor.
\end{remark}

\begin{remark}
\label{rem:quotients}
Let $\agG$ be a flat an affine group scheme over $R$.
If $\stY\to \stX$ is a representable map of fibered categories over $R$, e.g. an affine map, an action of $\agG$ on $\stY$ over $\stX$ consists in the following data: for all objects  $T\to \stX$ an action of $\agG$ (over $T$) on the algebraic space $\stY\times_\stX T$ which are compatible in an obvious way.
Here, considering only actions relative to $\stX$, we want to avoid to talk about action of groups on fibered categories in general.

If $\stY\to \stX$ is a representable map with an action of $\agG$, then we can form the quotient ``stack'' (the stackiness is only relative to the base) $[\stY/\agG]\to \stX$.
Its central property is that
for an object $T\to \stX$ and $\stY_T=\stY\times_\stX T$ we have
\[
[\stY/\agG] \times_\stX T \simeq [\stY_T/\agG]
\]
and that $\stY\to [\stY/\agG]$ is a $\agG$-torsor.

A proper definition of the category fibered in groupoids $[\stY/\agG]$ is the following. An object of $[\stY/\agG]$ over an $R$-scheme $T$ is a pair $(u,v)$ where $u\in \stX(T)$ and $v$ is a section of $[\stY_u/\agG]\to T$, where $\stY_u \to T$ is the base change of $\stY\to \stX$ along $u\colon T\to \stX$.

If the action of $\agG$ on $\stY$ is free, that is, it is free on all the $\stY_T$, then $[\stY_T/\agG]$ equals the quotient sheaf $\stY_T/\agG$. In this situation we will simply write $\stY/\agG$ in place of $[\stY/\agG]$.
This is coherent with the idea that if $\stY$ is a collection of sheaves indexed by objects of $\stX$ (more precisely a pseudo-functor) then $\stY/\agG$ is the collection of quotient sheaves.
\end{remark}

\begin{remark}
A $\agG$-torsor is a faithfully flat and affine map $\torP\to \stX$ together with an action of $\agG$ over $\stX$. Moreover the action of $\agG$ is free. If $\agH$ is a flat and affine subgroup of $\agG$ in particular we can consider the quotient $\torP/\agH \to \stX$, which, according to \autoref{rem:quotients}, coincides with $[\torP/\agH] \to \stX$.

We warn the reader that this notation, in some concrete situation, may lead to confusion, which we will try to avoid. For example $\Spec R \to \BB_R \agG$ is a $\agG$-torsor, but we would never write $\Spec R/\agG$ for $[\Spec R/\agG]=\BB_R \agG$.
%
\end{remark}

\begin{remark}
\label{rem:torsor-push-pull}
Let $\alpha \colon \agH \to \agG$ be a map of flat and affine group schemes over $R$.
Then there is an induced functor $\BB\alpha \colon \BB \agH \to \BB \agG$ such that, for any $\agH$-torsor $\torQ$, there is a $\agH$-equivariant map $\torQ\to \BB\alpha(\torQ)$ universal among maps from $\torQ$ to a $\agG$-torsor.

Moreover, recall that any (equivariant) homomorphism between torsors is automatically an isomorphism.
In particular, if $\torQ$ is an $\agH$-torsor and $\torP$ a $\agG$-torsor over a common base $\stX$, and $\torQ \to \torP$ an $\agH$-equivariant map over $\stX$, then the induced map $\BB\alpha(\torQ) \to \torP$ is an isomorphism.
\end{remark}

\begin{remark}
\label{rem:torsor-quotient}
Let $\alpha\colon \agG\to \mathsf K$ be a morphism of affine group schemes over $R$ and assume that $\agG$ is flat over $R$. Set also $\agH=\ker(\alpha)$, which is a closed subscheme of $\agG$, hence affine.

Then $\alpha$ is faithfully flat if and only if $\agH$ is flat over $R$ and $\alpha$ is an fpqc epimorphism. Indeed if $\alpha$ is an fpqc epimorphism then  the monomorphism $\agG/\agH \to \mathsf K$ is an epimorphism and hence an isomorphism. Moreover $\agG\to \agG/\agH$ is an $\agH$-torsor, hence affine and faithfully flat if $\agH$ is flat.

In this situation $\agG/\agH \simeq \mathsf K$ we have  
\[
\BB\alpha(\torQ)=\torQ/\agH
\]
for a $\agG$-torsor $\torQ$ (over some base). Indeed $\agG/\agH$ acts on $\torQ/\agH$ and this space is locally trivial, turning it into a torsor. As there is a $\agG$-equivariant map $\torQ \to \torQ/\agH$ we can conclude that $\BB\alpha(\torQ)=\torQ/\agH$ by \autoref{rem:torsor-push-pull}.
\end{remark}

The content of the following lemma and corollary is already known, see for example \cite[Lem. 1.17]{Nori}.

\begin{lemma}
\label{lem:inducedtorsor}
Let $\agG$ be a flat affine group scheme over a ring $R$, $\stX$ a pseudo-algebraic fibered category over $R$, $\torP$ a $\agG$-torsor over $\stX$ and $\alpha\colon \agH \to \agG$ a subgroup.
Then $\agH$-torsors $\torQ$ together with an $\agH$-equivariant map  $\torQ \to \torP$ over $\stX$ (equivalently an isomorphism $\BB\alpha(\torQ)\simeq \torP$) correspond to sections of $\torP/\agH \to \stX$. 
Given such a section the corresponding map  $\torQ\to \torP$ fits into the following fibre square
\begin{equation}
\label{eq:reduction to structure group}
\begin{tikzcd}
\torQ \ar[r] \ar[d, two heads, "/\agH"] & \torP \ar[d, two heads, "/H"] \\ 
\stX \ar[r] & \torP/\agH
\end{tikzcd}
\end{equation}
In particular $\torP$ is induced by an $\agH$-torsor if and only if $\torP/\agH\to \stX$ has a section.
\end{lemma}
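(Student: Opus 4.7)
The plan is to set up a bijection by giving explicit constructions in both directions and then checking they are mutually inverse. The diagram \eqref{eq:reduction to structure group} will describe the construction from sections to torsors, and a descent argument will describe the other direction.

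First I would dispose of the parenthetical claim that an $\agH$-equivariant map $\torQ\to\torP$ over $\stX$ is the same as an isomorphism $\BB\alpha(\torQ)\simeq\torP$. By the universal property of $\BB\alpha$ (\autoref{rem:torsor-push-pull}), any $\agH$-equivariant map $\torQ\to\torP$ factors uniquely through a map of $\agG$-torsors $\BB\alpha(\torQ)\to\torP$, and any such map is automatically an isomorphism; conversely the composition $\torQ\to\BB\alpha(\torQ)\simeq\torP$ is $\agH$-equivariant.

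Next, from an $\agH$-equivariant map $\torQ\to\torP$ over $\stX$ I would produce a section of $\torP/\agH\to\stX$. The composition $\torQ\to\torP\to\torP/\agH$ is $\agH$-invariant and maps to $\stX$ via the original structure map; since $\torQ\to\stX$ is the quotient $\torQ\to\torQ/\agH$ (as $\torQ$ is an $\agH$-torsor over $\stX$), this invariant map descends to a section $s\colon\stX\to\torP/\agH$. Conversely, from a section $s$ of $\torP/\agH\to\stX$, I define $\torQ$ by the fibre square \eqref{eq:reduction to structure group}, i.e.\ $\torQ=\stX\times_{\torP/\agH}\torP$. Since $\torP\to\torP/\agH$ is an $\agH$-torsor (using \autoref{rem:torsor-quotient} with the quotient $\agG\to\agG/\agH$ and flatness of $\agH$ implicit in the hypothesis that $\agH$-torsors make sense), base change yields an $\agH$-torsor $\torQ\to\stX$, and the projection $\torQ\to\torP$ is tautologically $\agH$-equivariant over $\stX$.

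Finally I would verify the two assignments are mutually inverse. Starting from a section $s$, building $\torQ$ as the pullback, and then re-descending the composition $\torQ\to\torP\to\torP/\agH$ along $\torQ\to\stX$ gives back $s$ by the very definition of the fibre product. Starting from an $\agH$-equivariant map $\torQ\to\torP$ and producing the section $s$, the induced map from $\torQ$ to the pullback $\stX\times_{\torP/\agH}\torP$ is an $\agH$-equivariant map between two $\agH$-torsors over $\stX$, hence an isomorphism by the rigidity of torsor morphisms recalled in \autoref{rem:torsor-push-pull}.

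The main obstacle is essentially bookkeeping: ensuring that the quotient $\torP/\agH\to\stX$ exists and behaves as expected in the pseudo-algebraic setting, and that $\torP\to\torP/\agH$ really is an $\agH$-torsor so that the base change in \eqref{eq:reduction to structure group} is again an $\agH$-torsor. Both facts are packaged in \autoref{rem:quotients} and \autoref{rem:torsor-quotient}, which reduces the argument to the formal diagram chase sketched above.
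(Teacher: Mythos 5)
Your proof is correct and follows essentially the same route as the paper's: the section is obtained by descending $\torQ\to\torP/\agH$ along $\torQ\to\torQ/\agH=\stX$, the inverse construction is the fibre product in \eqref{eq:reduction to structure group}, and the two are mutually inverse because the comparison map $\torQ\to\stX\times_{\torP/\agH}\torP$ is a morphism of $\agH$-torsors and hence an isomorphism. The paper's proof is just a much terser version of this; your only slight imprecision is that the fact that $\torP\to[\torP/\agH]$ is an $\agH$-torsor is stated in \autoref{rem:quotients} rather than \autoref{rem:torsor-quotient}.
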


\begin{proof}
If $\torQ$ is an $\agH$-torsor and $\torQ \to \torP$ is $\agH$-equivariant, then
$\stX = \torQ/\agH \to \torP/\agH$ is a section and
\[
\torQ\to \stX\times_{\torP/\agH}\torP
\]
is a morphism of $\agH$-torsors and therefore an isomorphism.
Conversely, the $\torQ$ defined in the statement is an $\agH$-torsor over $\stX$ and the map $\torQ\to \torP$ is $\agH$-equivariant.
\end{proof}

\begin{corollary}
\label{prop:induced-torsor}
Let $1 \to \agH \xto{\alpha} \agG \xto{\beta} \agG/\agH \to 1$ be an exact sequence of flat affine group schemes over $R$ (in particular $\agH$ is a normal subgroup of $\agG$). Then there is a Cartesian diagram
\[
\begin{tikzcd}
\BB \agH \ar[r] \ar[d, "\BB\alpha"'] & \Spec R \ar[d] \\
\BB \agG \ar[r] \ar[r, "\BB\beta"] & \BB(\agG/\agH)
\end{tikzcd}
\]
In other words, an $\agH$-torsor corresponds to a $\agG$-torsor together with a trivialization of the induced $\agG/\agH$-torsor.
\end{corollary}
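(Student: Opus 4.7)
The plan is to unpack what a map to the fiber product $\BB\agG \times_{\BB(\agG/\agH)} \Spec R$ is, and then identify this data with an $\agH$-torsor by applying \autoref{lem:inducedtorsor}. Concretely, for a pseudo-algebraic fibered category $\stX$ over $R$, a morphism $\stX \to \BB\agG \times_{\BB(\agG/\agH)} \Spec R$ consists of a $\agG$-torsor $\torP \to \stX$ together with a trivialization of the induced $(\agG/\agH)$-torsor $\BB\beta(\torP) \to \stX$. Equivalently, since trivializations of a torsor are the same as sections, this is a $\agG$-torsor $\torP$ together with a section of $\BB\beta(\torP) \to \stX$.

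The first step is to identify $\BB\beta(\torP)$ with $\torP/\agH$. This is exactly the content of \autoref{rem:torsor-quotient} applied to the exact sequence $1 \to \agH \to \agG \to \agG/\agH \to 1$: since $\beta$ is faithfully flat with kernel $\agH$, one has $\BB\beta(\torQ) = \torQ/\agH$ for any $\agG$-torsor. In particular, trivializations of $\BB\beta(\torP)$ correspond bijectively to sections of $\torP/\agH \to \stX$.

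Next, I would invoke \autoref{lem:inducedtorsor}: sections of $\torP/\agH \to \stX$ are in bijection with pairs $(\torQ, \torQ \to \torP)$ where $\torQ$ is an $\agH$-torsor on $\stX$ and $\torQ \to \torP$ is $\agH$-equivariant. By \autoref{rem:torsor-push-pull}, any such equivariant map is the same as an isomorphism $\BB\alpha(\torQ) \simeq \torP$. Hence the total data associated to a map $\stX \to \BB\agG \times_{\BB(\agG/\agH)} \Spec R$ is precisely the data of an $\agH$-torsor $\torQ$ on $\stX$ together with an identification of $\BB\alpha(\torQ)$ with the given $\agG$-torsor, which is tautologically the data of a map $\stX \to \BB\agH$.

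The main thing to check is that this bijection is functorial in $\stX$ and compatible with morphisms of torsors, so that it defines an equivalence of categories fibered in groupoids; but this is routine once the two remarks and the lemma are in place. There is no real obstacle, the only subtlety being the bookkeeping of $2$-morphisms in the fiber product of stacks, which amounts to checking that isomorphisms of $\agH$-torsors correspond exactly to isomorphisms of the corresponding $\agG$-torsors compatible with the trivialization of the quotient.
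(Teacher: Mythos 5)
Your proposal is correct and follows exactly the paper's argument: the paper's proof likewise consists of combining \autoref{lem:inducedtorsor} with the identification $\BB\beta(\torP)\simeq \torP/\agH$ from \autoref{rem:torsor-quotient}; you have simply spelled out the unpacking of the fiber product and the functoriality that the paper leaves implicit.
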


\begin{proof}
This is a direct consequence of \autoref{lem:inducedtorsor} because $\BB\beta(\torP)\simeq [\torP/\agH]$ (see \autoref{rem:torsor-quotient}).
\end{proof}

\begin{definition}
A \emph{$\agG$-equivariant quasi-coherent sheaf} on $\stX$ is a quasi-coherent sheaf $\shF$ on $\stX$ together with an action of $\agG$. By this we mean that for all objects $x\colon T\to \stX$ the group $\agG$ acts on $x^*\shF$ and all those actions are compatible. We denote by $\QCoh^G(\stX)$ the category of $\agG$-equivariant quasi-coherent sheaves.
\end{definition}

\begin{remark}
\label{rem:equivalences-qcoh}
Let $\agG$ be a flat affine group scheme and $\stX$ be a pseudo-algebraic fibered category over $R$.
Then
\[
\QCoh(\BB_\stX \agG)  \simeq \QCoh^{\agG}(\stX).
\]
This follows from the same result for schemes.
Taking ring objects, this equivalence extends also to an equivalence 
\[
\Alg(\BB_\stX \agG) \simeq \Alg^\agG(\stX)
\]
from the category of quasi-coherent sheaves of algebras on $\BB_\stX \agG$ to the category of $\agG$-equivariant sheaves of algebras on $\stX$.

By applying the relative spectrum (which is a contravariant equivalence) we also obtain an equivalence
\[
\Aff(\BB_\stX \agG) \to \Aff^\agG(\stX) 
\]
from the category of affine maps over $\BB_\stX\agG$ to the category of affine maps over $\stX$ together with an action of $\agG$ over $\stX$. This equivalence maps an affine map $\stZ\to \BB_\stX\agG$ to the base change $\stZ\times_{\BB_\stX\agG}\stX\to \stX$. 
A quasi-inverse is obtained by mapping an affine map $\stY\to \stX$ with an action of $\agG$ to $[\stY/\agG]\to \BB_\stX\agG$ (see \autoref{rem:quotients}). 
\end{remark}

For $\alpha \colon \agG \to \agH$ of flat affine group schemes over $R$, 
denote by $\BB \alpha$ also the map
\[
(\BB \alpha)_\stX \colon \BB_\stX \agG = \BB \agG \times \stX \xto{\BB \alpha \times \id_{\stX}} \BB \agH \times \stX = \BB_\stX \agH.
\]
Moreover, we will use from \autoref{rem:equivalences-qcoh} the composition
\[
\hat \alpha_* \colon \Alg^\agH(\stX) \isom \Alg(\BB_\stX\agH) \xto{\BB \alpha_*}  \Alg(\BB_\stX \agG) \isom \Alg^\agG(\stX).
\]

\begin{proposition}
\label{prop:superwichtig}
Let $1 \to \agH \xto{\alpha} \agG \xto{\beta} \agG/\agH \to 1$ be a short exact sequence of flat affine group schemes over $R$.
Let $\stX$ be a pseudo-algebraic fibered category over $R$.
Then for $\gsR \in \Alg^\agH(\stX)$ and $\gsS \coloneqq \hat\alpha_* \gsR$, there is an isomorphism
\[
[\Spec_\stX \gsR/\agH] \simeq [\Spec_\stX \gsS /\agG]
\]
over $\BB_\stX \agG$.
\end{proposition}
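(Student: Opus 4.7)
The plan is to move the whole statement to sheaves of algebras on the classifying stacks, reduce it to the standard compatibility of relative $\Spec$ with pushforward along an affine morphism, and then verify affineness of $\BB\alpha$ using \autoref{prop:induced-torsor}.

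Using the equivalences $\Alg^\agH(\stX)\simeq \Alg(\BB_\stX\agH)$ and $\Alg^\agG(\stX)\simeq \Alg(\BB_\stX\agG)$ from \autoref{rem:equivalences-qcoh}, I would denote by $\tilde\gsR$ and $\tilde\gsS$ the algebras on $\BB_\stX\agH$ and $\BB_\stX\agG$ corresponding to $\gsR$ and $\gsS$. The very definition of $\hat\alpha_*$ gives $\tilde\gsS = \BB\alpha_*\tilde\gsR$. The companion equivalences $\Aff(\BB_\stX\agH)\simeq \Aff^\agH(\stX)$ and $\Aff(\BB_\stX\agG)\simeq \Aff^\agG(\stX)$, also in \autoref{rem:equivalences-qcoh}, identify $\Spec_{\BB_\stX\agH}\tilde\gsR$ with $[\Spec_\stX\gsR/\agH]$ over $\BB_\stX\agH$, and $\Spec_{\BB_\stX\agG}\tilde\gsS$ with $[\Spec_\stX\gsS/\agG]$ over $\BB_\stX\agG$. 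Composing the first with $\BB\alpha\colon \BB_\stX\agH\to\BB_\stX\agG$ recovers the structure map to $\BB_\stX\agG$ appearing in the statement.

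With this bookkeeping in place the proposition reduces to producing an isomorphism
\[
\Spec_{\BB_\stX\agH}\tilde\gsR \isom \Spec_{\BB_\stX\agG}\BB\alpha_*\tilde\gsR
\]
of fibered categories over $\BB_\stX\agG$. I would deduce this from the general principle that, for any affine morphism $f\colon \stY\to \stZ$ and any $\gsI\in \Alg(\stY)$, the natural comparison $\Spec_\stY\gsI\to \Spec_\stZ f_*\gsI$ is an isomorphism of $\stZ$-fibered categories. Hence the whole argument collapses to showing that $\BB\alpha\colon \BB_\stX\agH\to \BB_\stX\agG$ is affine.

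For this I would apply \autoref{prop:induced-torsor}: the Cartesian square there presents $\BB\alpha\colon \BB\agH\to\BB\agG$ as the pullback of the universal $(\agG/\agH)$-torsor $\Spec R\to \BB(\agG/\agH)$, so $\BB\alpha$ is itself a $(\agG/\agH)$-torsor. Because $\agG/\agH$ is affine (it is a term of the given SES of affine group schemes), any such torsor is affine, and base changing along $\BB_\stX\agG\to \BB\agG$ preserves affineness. I expect the main subtlety to be organisational rather than conceptual, namely checking that the resulting isomorphism actually commutes with the structure maps to $\BB_\stX\agG$; this boils down to the identity $\BB\alpha\circ q = p$ for the trivial-torsor maps $q\colon \stX\to\BB_\stX\agH$ and $p\colon \stX\to\BB_\stX\agG$, and everything else is packaged by the equivalences in \autoref{rem:equivalences-qcoh} together with \autoref{prop:induced-torsor}.
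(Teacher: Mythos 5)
Your proposal is correct and follows essentially the same route as the paper: both arguments reduce the statement to the equivalence between affine maps over $\BB_\stX\agG$ and sheaves of algebras there, together with the compatibility of relative $\Spec$ with pushforward along the affine morphism $\BB\alpha$. The only cosmetic difference is that you verify affineness of $\BB\alpha$ by exhibiting it as a $(\agG/\agH)$-torsor via \autoref{prop:induced-torsor}, while the paper base changes directly along the atlas $\stX\to\BB_\stX\agG$ to get $(\agG/\agH)\times\stX\to\stX$; these are the same observation.
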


\begin{proof}
The map $\BB\alpha \colon \BB_\stX \agH \to \BB_\stX \agG$ is affine, 
because its base change along (the relative) fpqc covering $\stX \to \BB_\stX \agG$ is $(\agG/\agH)\times \stX \to \stX$ (the map $\agG/\agH \to \BB \agH$ is given by the $\agH$-torsor $\agG \to \agG/\agH$).

Let $\overline \gsR$ and $\overline \gsS$ be the sheaves of algebras over $\BB_\stX \agH$ and $\BB_\stX \agG$, respectively, corresponding to $\gsR$ and $\gsS$.
By construction, the maps
\[
p\colon [\Spec_\stX \gsR/\agH] \to \BB_\stX \agH\text{ and } q\colon [\Spec_\stX \gsS /\agG] \to \BB_\stX \agG
\]
are affine and 
\[
\overline \gsR = p_* \OO_{[\Spec_\stX \gsR/\agH]}\text{ and } \overline \gsS = q_* \OO_{[\Spec_\stX \gsS /\agG]}.
\]
Moreover the affine map
\[
[\Spec_\stX \gsR/\agH] \xto p \BB_\stX \agH \xto{\BB\alpha} \BB_\stX \agG
\]
corresponds to
\[
(\BB\alpha \circ p)_* \OO_{[\Spec_\stX \gsR/\agH]}
= q_* \OO_{[\Spec_\stX \gsS/\agG]}.
\]
Thus $\BB\alpha \circ p$ and $q$ are isomorphic as affine maps over $\BB_\stX\agG$ as required.
\end{proof}

\subsection{Families of line bundles as torsors by diagonalisable group schemes}

In this section, we translate between families of line bundles and torsors.
This is a very general result which apply to any pseudo-algebraic fibered category $\stX$. For this reason we are going to consider diagonalizable groups as group scheme over $\Spec \ZZ$ and apply the results of \autoref{sec:torsors} with $R=\ZZ$: the category $\stX$ is defined over $\ZZ$ in the sense that $\stX\to \Sch=\Sch/\ZZ$ is the structure morphism.  

\begin{definition}
Let $G$ be an abelian group and let $\ZZ[G]$ be its group algebra. We denote by
\[
\D{G} \coloneqq \Spec \ZZ[G]
\]
the \emph{Cartier dual group scheme} of $G$ over $\ZZ$.
\end{definition}

\begin{remark}
Observe that $\D{G}$ co-represents the functor
\[
\Sch \to \AbGrps,\ X \mapsto \Hom_{\Grps}(G,H^0(\OO_X^*)).
\]

Indeed for any ring $A$ one has
\[
\Hom_{\Grps}(G, A^*) \simeq\Hom_{\ZZ\mhyph\Algs}(\ZZ[G], A).
\]
\end{remark}

\begin{remark}
The association $G\longmapsto \D{G}$ defines a contravariant, fully faithful and exact functor from the category of abelian groups to the category of affine group schemes over $\ZZ$. 
\end{remark}

\begin{example}
The multiplicative group scheme is
$\Gm \coloneqq \D{\ZZ} = \Spec \ZZ[t,t^{-1}]$.
\end{example}

\begin{remark}
\label{rem:equivalences-diag}
Let $G$ be an abelian group and $\stX$ be a pseudo-algebraic fibered category. Then $\QCoh(\BB_\stX \D{G})$ is equivalent to the category of $G$-graded quasi-coherent sheaves: if $\D{G}(R)\ni \chi\colon G\to R^*$ and $t\colon \Spec R\to \stX$ then
\[
\gsR=\bigoplus_g \gsR_g\text{ and } \chi \cdot x = \chi(g) x \text{ for } x \in t^*\gsR_g .
\]
Hence we can extend the equivalences of \autoref{rem:equivalences-qcoh} to include also the category of $G$-graded quasi-coherent sheaves of algebras on $\stX$, which we denote by $G\mhyphen\GrAlg(\stX)$:

\[
\begin{tikzcd}
G\mhyphen\GrAlg(\stX) \ar[r, equal, "\sim"] \ar[d, equal, "\wr"'] & \Alg(\BB_\stX \D{G}) \ar[d, equal, "\wr"] \\
\Aff^{\D{G}}(\stX) \ar[r, equal, "\sim"'] & \Aff(\BB_\stX \D{G})
\end{tikzcd}
\]
where the vertical equivalences are contravariant.
\end{remark}

\begin{proposition}
\label{prop:torsor-diag}
Let $G$ be an abelian group, $\stX$ be a pseudo-algebraic fibered category and $\pi\colon \torP\to \stX$ be an affine map with an action of $\D{G}$ corresponding to the $G$-graded sheaf of algebras
\[
\pi_*\OO_\torP =\gsR=\bigoplus_g \gsR_g.
\]
Then $\torP\to \stX$ is a $\D{G}$-torsor if and only if all maps
\[
\OO_\stX\to \gsR_0\text{ and } \gsR_g \otimes \gsR_h \to \gsR_{g+h}
\]
are isomorphisms. In this case all sheaves $\gsR_g$ are invertible on $\stX$. 
\end{proposition}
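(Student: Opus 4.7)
The plan is to translate the statement under the equivalence $\Aff^{\D{G}}(\stX) \simeq G\mhyphen\GrAlg(\stX)$ of \autoref{rem:equivalences-diag} and analyse the action map graded component by graded component. Concretely, the $\D{G}$-action on $\torP$ corresponds to the $G$-grading on $\gsR$, with the co-action $\gsR\to \ZZ[G]\otimes_\ZZ \gsR$ sending $r_g\in\gsR_g$ to $e_g\otimes r_g$. The action map
\[
a\colon \D{G}\times_\stX \torP \to \torP\times_\stX \torP,\quad (h,p)\mapsto (hp,p),
\]
(whose invertibility together with faithful flatness characterises being a $\D{G}$-torsor, as recalled in \autoref{sec:torsors}) thus corresponds dually to the $\OO_\stX$-algebra morphism
\[
\mu\colon \gsR\otimes_{\OO_\stX}\gsR \to \ZZ[G]\otimes_\ZZ \gsR,\quad r_g\otimes s_h \mapsto e_g\otimes r_g s_h.
\]
Reindexing the target as $\bigoplus_{g,k} \ZZ e_g\otimes \gsR_k$, the $(g,h)$-component of $\mu$ lands in $\ZZ e_g\otimes \gsR_{g+h}$ and coincides with the multiplication $\gsR_g\otimes\gsR_h\to\gsR_{g+h}$, while all other components vanish. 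Hence $\mu$, and equivalently $a$, is an isomorphism if and only if every multiplication map $\gsR_g\otimes\gsR_h\to\gsR_{g+h}$ is an isomorphism; the unit axiom $\OO_\stX\to\gsR_0$ comes out of the $(0,0)$-component combined with the algebra unit.

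For the forward implication, assume $\torP\to\stX$ is a $\D{G}$-torsor. Then $a$ is an isomorphism, so by the above decomposition all the displayed maps are isomorphisms. Invertibility of each $\gsR_g$ follows by passing to an fpqc cover $U\to\stX$ trivialising $\torP$: the isomorphism $\torP|_U\simeq \D{G}\times U$ gives $\gsR|_U\simeq \OO_U[G]$, so $\gsR_g|_U\simeq \OO_U$, and invertibility descends along the fpqc cover.

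For the reverse implication, assume the maps $\OO_\stX\to\gsR_0$ and $\gsR_g\otimes\gsR_h\to\gsR_{g+h}$ are all isomorphisms. The bigraded decomposition then shows $\mu$ (equivalently $a$) is an isomorphism. The identities $\gsR_g\otimes\gsR_{-g}\simeq\gsR_0\simeq \OO_\stX$ exhibit each $\gsR_g$ as an invertible $\OO_\stX$-module, in particular flat; hence $\gsR=\bigoplus_g\gsR_g$ is flat, and faithfully flat because $\OO_\stX\simeq\gsR_0$ is a direct summand. Thus $\pi$ is affine and faithfully flat. Finally, for fpqc-local triviality one base changes along $\pi\colon\torP\to\stX$ itself: the isomorphism $a$ yields a $\D{G}$-equivariant isomorphism $\torP\times_\stX\torP\simeq \D{G}\times_\stX\torP$ over $\torP$, so $\pi$ becomes trivial over the fpqc cover $\torP\to\stX$, and $\torP$ is a $\D{G}$-torsor.

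The one point requiring care is the bookkeeping that identifies the algebraic form of the action map with the collection of multiplication maps under the bigrading; once that is in place both directions are quick, with the faithful-flatness argument reduced to the fact that $\gsR_0\simeq\OO_\stX$ splits off as a direct summand of a flat module.
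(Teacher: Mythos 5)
Your proof is correct and follows essentially the same route as the paper's: decompose the action/co-action map into its $(g,h)$-graded pieces to identify it with the multiplication maps, deduce flatness of each $\gsR_g$ from $\gsR_g\otimes\gsR_{-g}\simeq\gsR_0\simeq\OO_\stX$ and faithful flatness from the $\gsR_0$-summand, and obtain invertibility in the torsor case by fpqc descent from a trivialising cover. The only cosmetic difference is that you are slightly more explicit about local triviality over $\torP\to\stX$ itself, and slightly terser about why $\OO_\stX\to\gsR_0$ is an isomorphism in the forward direction (the paper uses the degree-one-cover argument; your trivialisation $\gsR|_U\simeq\OO_U[G]$ gives the same thing).
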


\begin{proof}
This is essentially \cite[Expos\'e I, \S 4.7.3 \& Expos\'e VIII, \S 4.1]{SGA3-1}. We give the argument for the convenience of the reader.

Recall that an $\D{G}$-action on $\torP$ corresponds to a map $\torP \times \D{G} \to \torP \times_\stX \torP$ or equivalently to a map $\eta\colon \gsR\otimes \gsR\to \gsR\otimes \ZZ[G]$.
Note that $\eta$ splits into the graded pieces
\[
\gsR_g \otimes \gsR_{g'}\to \gsR_{g+g'} \otimes \ZZ e_{g'}.
\]
In particular $\torP\times \D G \to \torP\times_\stX \torP$ is an isomorphism if and only if all maps $\eta_{g,h}\colon \gsR_g \otimes \gsR_h \to \gsR_{g+h}$ are isomorphisms.

We can therefore assume that the above two equivalent conditions hold for our algebra $\gsR$.

``$\Longleftarrow$'' In order to show that $\torP\to \stX$ is a $\D G$-torsor it is enough to show that $\pi_* \OO_\torP = \gsR$ is faithfully flat.

As $\gsR_g \otimes \gsR_{-g} \simeq \gsR_0 \simeq \OO_\stX$, one can show by an elementary argument that all sheaves $\gsR_g$ and therefore $\gsR$ are flat $\OO_\stX$-modules. As $\gsR_0 \simeq \OO_\stX$, the ring $\gsR$ is faithfully flat.

``$\Longrightarrow$'' Conversely, if $\torP\to \stX$ is a $\D G$-torsor, the sheaf $\gsR$ is fpqc locally on $\stX$ isomorphic to $\ZZ[G]$ as $G$-graded algebra. 
In particular all graded pieces $\gsR_g$ are fpqc locally invertible. By descent all sheaves $\gsR_g$ are therefore invertible.

In particular $\Spec \gsR_0 \to \stX$ is a cover of degree $1$, which is known to be an isomorphism, that is, $\OO_\stX\to \gsR_0$ is an isomorphism.
\end{proof}

The next step in this section is a translation of \autoref{prop:induced-torsor} into the language of families of line bundles.

\begin{proposition}
\label{rem:torsor-families}
Let $G$ be an abelian group. 
The assignment 
$\famF\mapsto \RS_\famF$
mapping a $G$-family to its Cox ring 
defines an equivalence between the category of $G$-families of line bundles on $\stX$ and the category $\BB \D G(\stX)$ of $\D G$-torsors over $\stX$. 

If $\alpha\colon H\to G$ is a map of abelian groups then the functor 
$$
\BB \D \alpha\colon \BB \D G(\stX) \to \BB \D H(\stX)
$$ 
maps a $G$-family $\famF=(\famF_g)_{g\in G}$ to the $H$-family $(\famF_{\alpha(h)})_{h\in H}$.
\end{proposition}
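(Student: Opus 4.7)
The plan is to chain the equivalences from \autoref{rem:equivalences-diag} with \autoref{prop:torsor-diag}. First, I would identify a $G$-family of line bundles $\famF$ with its Cox sheaf $\RS_\famF = \bigoplus_{g \in G} \famF_g$, viewed as a $G$-graded commutative $\OO_\stX$-algebra: the unit is $\xi_0$ and multiplication is induced by the $\xi_{g,g'}$. The three commutative diagrams in \autoref{def:family-sheaves} translate verbatim into associativity, commutativity and unitality of a $G$-graded algebra, and a morphism of $G$-families is precisely a degree-preserving $\OO_\stX$-algebra homomorphism. Moreover, because each $\famF_{g'}$ is a line bundle, the requirement that $\omega_{g,g'}\colon \famF_g \to \SHom(\famF_{g'},\famF_{g+g'})$ be an isomorphism is equivalent to $\xi_{g,g'}\colon \famF_g \otimes \famF_{g'} \to \famF_{g+g'}$ being an isomorphism; together with $\xi_0\colon \OO_\stX \isom \famF_0$, these are exactly the hypotheses of \autoref{prop:torsor-diag}.

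Next, I would invoke the chain of equivalences in \autoref{rem:equivalences-diag}, which sends the $G$-graded algebra $\RS_\famF$ to the affine map $\Spec_\stX \RS_\famF \to \stX$ equipped with its $\D G$-action, equivalently to the object $[\Spec_\stX\RS_\famF/\D G] \to \BB_\stX\D G$ of $\Aff(\BB_\stX\D G)$. By \autoref{prop:torsor-diag}, this affine map is a $\D G$-torsor precisely when the graded pieces are invertible and the unit and multiplication maps are isomorphisms, so our conditions on $\famF$ land us in $\BB\D G(\stX)$. Conversely, for a $\D G$-torsor $\torP \to \stX$ with $\pi_*\OO_\torP = \gsR = \bigoplus_g \gsR_g$, \autoref{prop:torsor-diag} guarantees that the graded pieces are line bundles and that the isomorphism conditions hold, so $\gsR$ defines a $G$-family. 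This gives the claimed equivalence of categories.

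For the second statement, the functor $\BB\D\alpha\colon \BB\D G(\stX) \to \BB\D H(\stX)$ induced by $\D\alpha\colon \D G \to \D H$ extends the structure group, sending a $\D G$-torsor $\torP$ to the quotient $(\torP \times \D H)/\D G$ with $\D G$ acting diagonally (with weight $g$ on $\famF_g$ and weight $-\alpha(h)$ on $e_h \in \ZZ[H]$). Taking $\D G$-invariants in $\RS_\famF \otimes \ZZ[H]$ isolates exactly the summands with $g = \alpha(h)$, producing the $H$-graded algebra $\bigoplus_{h \in H} \famF_{\alpha(h)} \otimes e_h$, which corresponds under the equivalence to the $H$-family $(\famF_{\alpha(h)})_{h \in H}$. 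The main obstacle is the bookkeeping required to check that the three diagrams defining a $G$-family match exactly the axioms of a commutative $G$-graded algebra and that morphisms transport correctly through the equivalences of \autoref{rem:equivalences-diag}; once this is set up, everything else follows from the already established \autoref{prop:torsor-diag}.
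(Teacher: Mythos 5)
Your proposal is correct. The first half coincides with the paper's argument, which simply invokes \autoref{prop:torsor-diag} after the (routine) identification of a $G$-family of line bundles with a $G$-graded $\OO_\stX$-algebra whose unit and multiplication maps are isomorphisms; your observation that invertibility of $\famF_{g'}$ makes ``$\omega_{g,g'}$ iso'' equivalent to ``$\xi_{g,g'}\colon\famF_g\otimes\famF_{g'}\to\famF_{g+g'}$ iso'' is exactly the point that makes this dictionary work. For the second half you take a genuinely different route: you realise $\BB\D\alpha(\torP)$ as the contracted product $(\torP\times\D H)/\D G$ and compute the $\D G$-invariants of $\RS_\famF\otimes\ZZ[H]$ to isolate $\bigoplus_h\famF_{\alpha(h)}\otimes e_h$. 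The paper instead exhibits the $H$-graded algebra map $\bigoplus_{h}\famF_{\alpha(h)}\to\bigoplus_g\famF_g$, notes that its relative spectrum is a $\D G$-equivariant map from $\torP$ to a $\D H$-torsor, and concludes by the universal property in \autoref{rem:torsor-push-pull}. Both arguments are valid; the paper's is shorter and stays within the machinery it has already set up (the contracted-product description of $\BB\alpha$ is standard but is not established anywhere in the paper, so strictly speaking you are importing an external fact), whereas yours has the advantage of producing the induced torsor explicitly rather than characterising it by a universal property. In either version one should record the small verification that $(\famF_{\alpha(h)})_{h\in H}$ with the induced structure maps is itself an $H$-family of line bundles, so that \autoref{prop:torsor-diag} identifies its relative spectrum as a $\D H$-torsor; this is immediate but is the step that licenses the final identification.
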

\begin{proof}
The first part is just \autoref{prop:torsor-diag}. For the second, given a $G$-family $\famF$ there is an $H$-graded map
\[
\bigoplus_{h\in H} \famF_{\alpha(h)} \to \bigoplus_{g\in G} \famF_g
\]
By \autoref{rem:torsor-push-pull} it follows that $\Spec(\bigoplus_{h\in H} \famF_{\alpha(h)})$ is the induced $\D H$-torsor, as required.
\end{proof}

\begin{example}
\label{exa:free-families}
For $G = \ZZ$, we have $\BB \D{G} = \BB \Gm$ and elements of $\BB \Gm(\stX)$ correspond to $\ZZ$-families of line bundles, which are completely determined by a line bundle on $\stX$. So $\BB \Gm(\stX)$ is equivalent to the category of line bundles on $\stX$ with isomorphisms as arrows.

More generally, if $G = \ZZ^{(I)}$ for a set $I$ it follows that $\D{G}\simeq \Gm^I$ and $\BB \D{G} \simeq (\BB \Gm)^I$. So, by \autoref{prop:torsor-diag}, the category of $G$-families of line bundles on $\stX$ is equivalent to the category of collections of line bundles indexed by $I$, where arrows between them are collections of isomorphisms. The correspondence takes a $G$-family to its evaluation on a basis of $G$.
\end{example}

\begin{remark}
\label{rem:push-pull-diag}
If $\alpha\colon H\to G$ is a map of abelian groups we get maps $\D{\alpha} \colon \D{G}\to \D{H}$ and $\BB \D{\alpha} \colon \BB_\stX \D{G}\to \BB_\stX \D{H}$. 

One can check directly that $(\BB\D \alpha)^*$ and $(\BB\D \alpha)_*$ acts on $\D G$-equivariant quasi-coherent sheaves interpreted as $G$-graded sheaves as
\[
\shF = \bigoplus_{h \in H} \shF_h \longmapsto \BB \D{\alpha}^*(\shF) = \bigoplus_{g \in G} \left(\bigoplus_{h\in \alpha^{-1}(g)} \shF_h\right)
\]
and
\[
\shG=\bigoplus_{g\in G} \shG_g \longmapsto \BB \D{\alpha}_*(\shG) = \bigoplus_{h\in H} \shG_{\alpha(h)},
\]
respectively.

If $\shF$ or $\shG$ are sheaves of algebras then $\BB \D{\alpha}^*(\shF)$ or $\BB \D{\alpha}_*(\shG)$ are sheaves of algebras in the obvious way, respectively. 

If $\alpha \colon H\into G$ is injective with cokernel $G/H$, so that $\D{G/H}$ is the kernel of the quotient map $\D \alpha\colon \D{G}\to \D{H}$, then
\[
\BB \D{\alpha}_*(\shG) = \shG_{|H} \coloneqq \bigoplus_{h\in H} \shG_{\alpha(h)} = \shG^{\D{G/H}}.
\]
\end{remark}

\begin{proposition}
\label{lem:induced-family}
Let $0 \to H \xto{\alpha} G \xto{\beta} G/H \to 0$ be an exact sequence of abelian groups and $\stX$ a pseudo-algebraic fiber category.
A $G/H$-family of line bundles on $\stX$ corresponds naturally to a $G$-family $\famF$  of line bundles on $\stX$ together with a trivialization of $\famF_{|H}$.
\end{proposition}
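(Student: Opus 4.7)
The plan is to translate everything into the language of torsors under diagonalisable group schemes, where the desired statement becomes an instance of \autoref{prop:induced-torsor} already established in \autoref{sec:torsors}.

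First, I would apply the Cartier dual functor $G \mapsto \D G$ to the short exact sequence $0 \to H \xto{\alpha} G \xto{\beta} G/H \to 0$. Since this functor is exact and contravariant (see the remark after the definition of $\D G$), we obtain a short exact sequence
\[
1 \to \D{G/H} \xto{\D \beta} \D G \xto{\D \alpha} \D H \to 1
\]
of affine group schemes over $\ZZ$. All three groups are diagonalisable, hence flat, so the hypotheses of \autoref{prop:induced-torsor} are satisfied.

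Second, I would invoke \autoref{prop:induced-torsor} for this exact sequence and the base $\stX$, which yields that a $\D{G/H}$-torsor on $\stX$ is the same as a $\D G$-torsor $\torP$ on $\stX$ together with a trivialisation of its induced $\D H$-torsor $\BB \D \alpha (\torP)$. Now I translate this back to the language of families using \autoref{rem:torsor-families}: $\D{G/H}$-torsors correspond to $G/H$-families of line bundles, $\D G$-torsors correspond to $G$-families of line bundles $\famF$, and the induced $\D H$-torsor under $\BB \D \alpha$ corresponds, according to the explicit description in \autoref{rem:torsor-families} (or equivalently \autoref{rem:push-pull-diag}), to the $H$-family $(\famF_{\alpha(h)})_{h \in H}$, which is exactly $\famF_{|H}$.

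The only subtle step is verifying that the two descriptions of $\BB \D \alpha$ applied to a $G$-family—namely the one coming from \autoref{prop:induced-torsor} (taking the quotient of the torsor by $\D{G/H}$, which is the kernel of $\D \alpha$) and the one from \autoref{rem:torsor-families} (restricting the grading along $\alpha$)—actually agree; but this is exactly the computation carried out in \autoref{rem:push-pull-diag}, where $\BB \D \alpha_*$ on $G$-graded algebras is shown to be the restriction $\famF \mapsto \famF_{|H}$. Combining these identifications gives the natural correspondence claimed in the proposition, and naturality follows from the naturality of all the equivalences used.
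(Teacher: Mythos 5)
Your proof is correct and follows exactly the paper's route: the paper's own proof is the one-line statement that the proposition is a translation of \autoref{prop:induced-torsor} via \autoref{rem:torsor-families}, and you have simply spelled out that translation (dualising the exact sequence, matching $\D{G/H}$-, $\D G$- and $\D H$-torsors with the corresponding families, and checking via \autoref{rem:push-pull-diag} that $\BB\D\alpha$ acts as restriction to $H$). No changes needed.
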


\begin{proof}
This is just a translation of \autoref{prop:induced-torsor} using \autoref{rem:torsor-families}.
\end{proof}

\begin{remark}
\label{rem:induced-explicit}
In the situation of \autoref{lem:induced-family}, we make the induced $G/H$-family explicit, using \autoref{rem:push-pull-diag}.
A trivialisation of the $H$-family $\famF_{|H}$ is a  collection of isomorphisms $\xi_h \colon \famF_h \to \OO_\stX$ for $h\in H$ such that the morphism
\[
\xi\colon (\RS_\famF)^{\D{G/H}}=\RS_{\famF_{|H}}=\bigoplus_{h \in H} \famF_h\to \OO_\stX
\]
is an $\OO_\stX$-algebra morphism. 
The $\D{G/H}$-torsor induced by $\famF$ and the trivialization $\xi$ is given by the fibre product in \eqref{eq:reduction to structure group} of \autoref{lem:inducedtorsor}.
The corresponding $G/H$-graded sheaf of algebras $\RS$ is therefore given by
\[
\RS = \RS_\famF\otimes_{\RS_{\famF_{|H}}}\OO_\stX.
\]
More explicitly, by defining the ideal
\[
\gsI \coloneqq \ker(\xi)=\langle \xi_h^{-1}(1)-1 \mid h \in H \rangle < \RS_{\famF_{|H}},
\]
we obtain that
\[
\RS = \RS_\famF\otimes_{\RS_{\famF_{|H}}}\OO_\stX = \RS_\famF/\gsI \RS_\famF.
\]
Finally, for all $u \in G/H$,
\[
(\RS_\famF/\gsI_\xi \RS_F)_u = \frac{\bigoplus_{g \in \beta^{-1}(u)} \famF_g}{\gsI_\xi \bigoplus_{g \in \beta^{-1}(u)} \famF_g} ,
\]
where  $\beta\colon G\to G/H$ is the projection.
\end{remark}

\section{Existence and uniqueness of Cox rings}
\label{sec:coxrings}

In this section, we first show existence and uniqueness of Cox rings of invertible sheaves. 
From this we obtain analogous results for the Cox rings of reflexive sheaves.
Finally, we compare these results to the construction in \cite{Coxrings}.

\subsection{The Cox ring of invertible sheaves}
\label{sec:piccox}

\begin{proposition}
\label{prop:free-res}
Let $G$ be an abelian group.
Then there is a short exact sequence 
$0 \to K_1 \to K_0 \to G \to 0$ with $K_1$ and $K_0$ free abelian groups.
\end{proposition}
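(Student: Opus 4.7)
The plan is to realise $G$ as a quotient of a free abelian group and then invoke the fact that subgroups of free abelian groups are free.

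First I would choose any generating set for $G$ (for definiteness one can take the whole underlying set of $G$) and let $K_0$ be the free abelian group on that set, equipped with the obvious surjection $\pi \colon K_0 \twoheadrightarrow G$ sending each generator $e_g$ to $g \in G$. This immediately gives a short exact sequence
\[
0 \to K_1 \to K_0 \xrightarrow{\pi} G \to 0
\]
with $K_0$ free abelian and $K_1 \coloneqq \ker(\pi)$.

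It then remains to argue that $K_1$ is free abelian as well. This is the classical theorem that every subgroup of a free abelian group is free, which holds because $\ZZ$ is a principal ideal domain (and more generally submodules of free modules over a PID are free). I would just cite this result rather than reprove it; if a self-contained argument were desired, one could well-order a basis of $K_0$ and construct a basis of $K_1$ by transfinite induction, at each stage picking a generator of the projection of $K_1$ onto the next coordinate (a subgroup of $\ZZ$, hence cyclic).

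There is essentially no obstacle here: the only non-trivial ingredient is the subgroup theorem for free abelian groups, which is standard. One brief subtlety worth noting is that no finite generation hypothesis is imposed on $G$, so $K_0$ (and hence possibly $K_1$) may need to be free of infinite rank; the transfinite version of the subgroup theorem handles this case without modification.
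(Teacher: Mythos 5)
Your proposal is correct and is essentially identical to the paper's proof: both take $K_0=\ZZ^{(G)}$ with the canonical surjection onto $G$ and cite the theorem that subgroups of free abelian groups are free (the paper references \cite[Thm.~I.7.3]{Lang}). Your extra remarks on the transfinite argument and the infinite-rank case are fine but not needed beyond the citation.
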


\begin{proof}
Consider $K_0 = \ZZ^{(G)}$ and the natural surjection $\ZZ^{(G)} \to G$.
Its kernel $K_1$ is free, since any subgroup of a free abelian group is free, see, for example, \cite[Thm. I.7.3]{Lang}.
\end{proof}

If $\stX$ is a pseudo-algebraic fibred category, then we denote by $\overline{\Pic(\stX)}$ the full subcategory of $\QCoh(\stX)$ of invertible sheaves
and with $\Pic(\stX)$ the set of their isomorphism classes.

For $\famF$ a $G$-family of line bundles on $\stX$, we denote by 
\[ \alpha_\famF\colon G \to \Pic(\stX), \quad  g \mapsto [\famF_g]\] the morphism induced by $\famF$, which is a group homomorphism.
We say that a $G$-family $\famF$ \emph{lifts} a group homomorphism $\mu \colon G \to \Pic(\stX)$ if $\famF_g \simeq \mu(g)$ for all $g \in G$.

\begin{lemma}
\label{lem:lifting-free}
Let $K$ be a free abelian group and $\alpha\colon K\to \Pic(\stX)$ be a group homomorphism. Then the morphism $\alpha$ lifts to a unique $K$-family up to isomorphism.
\end{lemma}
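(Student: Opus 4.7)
The plan is to use \autoref{exa:free-families}, which gives, after the choice of a basis, an equivalence between the category of $K$-families of line bundles on $\stX$ and the category of collections of line bundles indexed by that basis (with isomorphisms as arrows). This reduces the problem of lifting $\alpha$ to the essentially trivial task of choosing a representative line bundle for each basis element.

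More precisely, first I would fix a basis $\{e_i\}_{i \in I}$ of $K$, so that $K \simeq \ZZ^{(I)}$ as abelian groups. For each $i \in I$, choose any line bundle $\shL_i$ on $\stX$ whose isomorphism class is $\alpha(e_i) \in \Pic(\stX)$; this is possible by definition of $\Pic(\stX)$. By \autoref{exa:free-families}, the collection $(\shL_i)_{i \in I}$ determines, uniquely up to isomorphism, a $K$-family of line bundles $\famF$ on $\stX$ with $\famF_{e_i} \simeq \shL_i$ for each $i$.

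Next I would verify that $\famF$ actually lifts $\alpha$, i.e.\ that $\alpha_\famF = \alpha$. Any $k \in K$ can be written uniquely as a finite sum $k = \sum_i n_i e_i$ with $n_i \in \ZZ$. Using the multiplication maps $\xi$ of the $K$-family (which give isomorphisms of line bundles by \autoref{rem:reflexive-torsor}), one obtains $\famF_k \simeq \bigotimes_i \shL_i^{\otimes n_i}$, and passing to isomorphism classes in $\Pic(\stX)$ this gives $[\famF_k] = \sum_i n_i [\shL_i] = \sum_i n_i \alpha(e_i) = \alpha(k)$, as both sides are group homomorphisms $K \to \Pic(\stX)$ agreeing on the basis.

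For uniqueness, suppose $\famF$ and $\famG$ are two $K$-families both lifting $\alpha$. Then for each $i \in I$ we have $[\famF_{e_i}] = \alpha(e_i) = [\famG_{e_i}]$ in $\Pic(\stX)$, so there exist isomorphisms $\famF_{e_i} \simeq \famG_{e_i}$ of line bundles on $\stX$. This gives an isomorphism of the two collections indexed by $I$, and by the equivalence of \autoref{exa:free-families} this isomorphism lifts uniquely to an isomorphism of $K$-families $\famF \simeq \famG$. The only potential obstacle is checking that the equivalence in \autoref{exa:free-families} really delivers both existence and compatibility of the higher multiplication maps $\xi_{k,k'}$ from the basis data alone; but this is precisely its content, so no extra work is needed here.
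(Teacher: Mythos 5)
Your proposal is correct and follows exactly the paper's route: the paper's entire proof is the single sentence ``This follows immediately from \autoref{exa:free-families}'', and your argument simply unpacks that equivalence (choose representatives on a basis, check the lift on isomorphism classes, use the equivalence for uniqueness). No gaps.
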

\begin{proof}
This follows immediately from \autoref{exa:free-families}.
\end{proof}

\begin{theorem} 
\label{thm:families-extension}
Let $H<G$ be abelian groups and $\famF$ be an $H$-family of line bundles on $\stX$.
For a group homomorphism $\mu\colon G \to \Pic(\stX)$ which extends $\alpha_\famF\colon H \to \Pic(\stX)$ holds:
\begin{itemize}
\item there is a $G$-family of line bundles $\famG$ lifting $\mu$ and extending $\famF$, that is with $\famG_{|H} \simeq \famF$;
\item the set of isomorphism classes of $G$-families extending $\famF$ and lifting $\mu$ is isomorphic to
\[
\Ext^1(G/H,H^0(\OO^*_\stX)).
\]
\end{itemize}
\end{theorem}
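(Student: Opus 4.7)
The plan is to combine the equivalence between families of line bundles and diagonalisable torsors (\autoref{rem:torsor-families}), the key reduction lemma (\autoref{lem:induced-family}), and the rigidity of families indexed by free groups (\autoref{lem:lifting-free}), via a free resolution of $G/H$ (\autoref{prop:free-res}). First I would construct one $G$-family $\famG$ extending $\famF$ and lifting $\mu$, and then I would classify all such $G$-families up to isomorphism using the long exact sequence of $\Hom(\blank, H^0(\OO_\stX^*))$ applied to the resolution of $G/H$.

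For existence, choose a free resolution $0 \to L \to K \to G/H \to 0$ (with $L$ free as a subgroup of the free abelian group $K$), and form the pullback $G' \coloneqq G \times_{G/H} K$, fitting into the two short exact sequences
\[
0 \to L \to G' \to G \to 0\quad\text{and}\quad 0 \to H \to G' \to K \to 0.
\]
The second sequence splits because $K$ is free, yielding $G' \simeq H \oplus K$; under this identification the composition $\mu'\colon G' \to G \xto{\mu} \Pic(\stX)$ restricts to $\alpha_\famF$ on $H$ and to some homomorphism $\nu\colon K \to \Pic(\stX)$ on $K$. By \autoref{lem:lifting-free}, $\nu$ lifts uniquely up to isomorphism to a $K$-family $\famG^K$; combining it with $\famF$ produces a $G'$-family $\famG'$ extending $\famF$ and lifting $\mu'$. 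Applying \autoref{lem:induced-family} to $0 \to L \to G' \to G \to 0$, a descent of $\famG'$ to a $G$-family amounts to a trivialisation of the $L$-family $\famG'|_L$. For each $\ell \in L$ the line bundle $\famG'_\ell$ lifts $\mu(0)=0\in\Pic(\stX)$, hence is isomorphic to $\OO_\stX$; since $L$ is free, a multiplicative trivialisation can be produced by defining it on a basis and extending. Unpacking \autoref{rem:induced-explicit} shows that the induced $G$-family $\famG$ satisfies $\famG|_H \simeq \famF$.

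For the classification, $\famG'$ is determined up to isomorphism by $\famF$ and $\nu$, and the set of trivialisations of $\famG'|_L$ is a torsor under $\Hom(L, H^0(\OO_\stX^*))$. Two trivialisations yield isomorphic $G$-families extending $\famF$ precisely when they differ by an automorphism of $\famG'$ whose restriction to the $H$-part is the identity. The standard rigidity argument for line-bundle families (automorphisms act by scalars on each line bundle, compatibly with multiplication) identifies such automorphisms with characters $\lambda\colon K \to H^0(\OO_\stX^*)$, acting on trivialisations via the restriction map $K \to L$. Hence the set of isomorphism classes equals
\[
\Hom(L, H^0(\OO_\stX^*))\,\big/\,\im\!\bigl(\Hom(K, H^0(\OO_\stX^*)) \to \Hom(L, H^0(\OO_\stX^*))\bigr).
\]
Applying $\Hom(\blank, H^0(\OO_\stX^*))$ to $0 \to L \to K \to G/H \to 0$ and using that $K$ is free (so $\Ext^1(K, \blank) = 0$) yields a four-term exact sequence whose last term is $\Ext^1(G/H, H^0(\OO_\stX^*))$, identifying the quotient above with $\Ext^1(G/H, H^0(\OO_\stX^*))$.

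The main obstacle is verifying that the descent from $\famG'$ to $\famG$ remembers the given $H$-family $\famF$, and not merely the underlying map $\alpha_\famF$: this requires careful bookkeeping of how $L$ and $H$ sit inside $G' \simeq H \oplus K$, and of how automorphisms of $\famG'$ restrict to the $H$- and $L$-parts. Once this bookkeeping is done, both existence and the $\Ext^1$-classification fall out of the free resolution argument above.
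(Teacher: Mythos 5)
Your proof is correct and reaches both conclusions, but it takes a genuinely different route through the same toolkit (free resolutions, \autoref{lem:lifting-free}, \autoref{lem:induced-family}, and the $\Hom$--$\Ext$ sequence). For existence, the paper resolves $H$ and $G/H$ separately and glues the resolutions with the horseshoe lemma, which forces it to first transport $\famF$ to a $K_0$-family before extending; you instead form the fibred product $G'=G\times_{G/H}K$ and use the splitting $G'\simeq H\oplus K$, so that $\famF$ itself survives as the $H$-component and only $G/H$ needs to be resolved --- a mild but real simplification. For the classification, the paper reduces to $\mu$ trivial and $H=0$ by tensoring with a fixed solution, and then performs an explicit symmetric-cocycle computation with the presentation $0\to K\to \ZZ^{(G-0)}\to G\to 0$; you avoid cocycles altogether by identifying the $G$-families in question with trivialisations of $\famG'|_L$ (a torsor under $\Hom(L,H^0(\OO_\stX^*))$) modulo the automorphisms of $\famG'$ fixing the $H$-part, which are exactly $\Hom(K,H^0(\OO_\stX^*))$ acting through restriction to $L$, and then reading off $\Ext^1(G/H,H^0(\OO_\stX^*))$ from the $\Hom$-sequence of your resolution. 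One point you handle consistently but should state explicitly: insisting that the automorphisms be the identity on the $H$-part means you are classifying pairs consisting of a $G$-family together with a chosen isomorphism of its restriction to $H$ with $\famF$; this is the convention under which the answer is exactly $\Ext^1(G/H,H^0(\OO_\stX^*))$ (otherwise one would further quotient by the image of $\Hom(H,H^0(\OO_\stX^*))$), and it is the same convention the paper adopts implicitly when it reduces to $H=0$ via \autoref{lem:induced-family}. What your version buys is a classification that is uniform in the chosen resolution and stays inside the torsor formalism of \autoref{rem:torsor-families}; what the paper's version buys is a completely explicit, elementary description of the bijection in terms of structure constants.
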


\begin{proof}
We first show existence.
Let $0 \to K_1 \to K_0 \to H \to 0$ and $0 \to K_1' \to K_0' \to G/H \to 0$ be free resolutions of $H$ and $G/H$, respectively (see \autoref{prop:free-res}).
By the horseshoe lemma, these two resolutions fit together to form a commutative diagram, where also the columns are short exact sequences:
\[
\begin{tikzcd}
0 \ar[r] & K_1 \ar[r] \ar[d] & K_0 \ar[r] \ar[d] & H \ar[r] \ar[d] & 0 \\
0 \ar[r] & K_1 \oplus K_1' \ar[r] \ar[d] & K_0 \oplus K_0' \ar[r] \ar[d] & G \ar[r] \ar[d] & 0 \\
0 \ar[r] & K_1' \ar[r] & K_0' \ar[r] & G/H \ar[r] & 0 \\
\end{tikzcd}
\]

Using the surjection $K_0 \to H$, we can turn the $H$-family $\famF$ into a $K_0$-family $\tilde \famF$. 
Now extend  $\tilde \famF$ to  a $K_0 \oplus K_0'$-family $\tilde \famG$, lifting the homomorphism $K_0 \oplus K'_0 \to G \to \Pic(\stX)$ 
as in \autoref{lem:lifting-free}.
Both $K_0$-families $\tilde \famG_{|K_0}$ and $\tilde \famF$ lift the same homomorphism $K_0\to \Pic(\stX)$. Thus we can choose an isomorphism $\tilde \famG_{|K_0}\simeq\tilde \famF$, again by \autoref{lem:lifting-free}. 
Using this isomorphism we also get that $\tilde \famG_{|K_1}\simeq \tilde \famF_{|K_1}$ are isomorphic to the trivial $K_1$-family.
Since $\tilde \famG_{|K_1'}$ lifts the trivial morphism $K_1'\to \Pic(\stX)$, it is isomorphic to the trivial $K'_1$-family as well. The two trivializations just defined give an isomorphism of $\tilde \famG_{|(K_1\oplus K_1')}$ and the trivial $(K_1\oplus K_1')$-family.
Applying \autoref{lem:induced-family} to $\tilde \famG$, there is a $G$-family $\famG$.
By construction, $\famG_{|H} \simeq \famF$.

We now prove the second part of the statement. Fix one $G$-family $\famG$ extending  $\famF$ and lifting $\mu$.
Given another lifting $\famG'$,  the $G$-family ${\mathcal N} \coloneqq {\famG'}  \otimes {\famG}^{-1}$ extends the trivial $H$-family and lifts the trivial morphism ${\mathcal N}\colon G \to \Pic(\stX)$. 
Thus we may assume that $\mu \colon G \to \Pic(\stX)$ is trivial, so $\famG_g \simeq \OO_\stX$ for all $g \in G$.
Moreover by \autoref{lem:induced-family}, we may even assume that $H=0$.

Consider now the exact sequences of groups
\begin{equation}
\label{main exact sequence}
0 \to K \to \ZZ^{(G-0)} \to G \to 0.
\end{equation}
It is easy to show that $K$ is generated by the elements $e_{g,h}=e_g+e_h-e_{g+h}$, where we set $e_0 \coloneqq 0$. 
Note that by our assumption on the $G$-family $\famG$, the structure morphisms $\xi_{g,h}$ are elements of $H^0(\OO_\stX)^*$ and satisfy the following relations: $\xi_{g,0}=1$, $\xi_{g,h}=\xi_{h,g}$ and $\xi_{g,h}\xi_{g+h,t}=\xi_{h,t}\xi_{g,h+t}$.

A direct computation on the relations between the $e_{g,h}$ shows that $\xi\colon K\to H^0(\OO_\stX)^*,\ e_{g,h} \mapsto \xi_{g,h}$ is a group homomorphism. Let $\famG'$ be another $G$-family such that $\famG'_g = \OO_\stX$ and $\eta \in \Hom(K,H^0(\OO_\stX)^*)$ its structure morphisms.
An isomorphism between $\famG$ and $\famG'$ is a collection of $\mu_g\in H^0(\OO_\stX)^*$ for all $0\neq g\in G$ such that
\[
\xi(e_{g,h})\mu_g\mu_h\mu^{-1}_{g+h}=\eta(e_{g,h}),
\]
which follows from the definition of a morphism and the identifications we made.
In other words, an isomorphism is given by $\mu\in \Hom(\ZZ^{(G-0)},H^0(\OO_\stX)^*)$ such that
\[
\xi(e_{g,h})\mu(e_{g,h})=\eta(e_{g,h}).
\]
Since the $e_{g,h}$ generates $K$, the above relation just means $\xi\mu_{|K}=\eta$ in $\Hom(K,H^0(\OO_\stX)^*)$. 

In conclusion the set of isomorphism classes of $G$-families lifting the trivial morphism $G\to \Pic(\stX)$ is in one-to-one correspondence with the quotient of $\Hom(K,H^0(\OO_\stX)^*)$ by the subgroup of elements which extends to $\ZZ^{(G-0)}$.
But applying $\Hom(-,H^0(\OO_\stX)^*)$ to the sequence (\ref{main exact sequence}) we get an exact sequence
\[
\Hom(\ZZ^{(G-0)},H^0(\OO_\stX)^*)\to \Hom(K,H^0(\OO_\stX)^*)\to \Ext^1(G,H^0(\OO_\stX)^*)\to 0
\]
which ends the proof.
\end{proof}

\begin{remark} \label{rem:family-uniqueness}
By the above proposition 
all $G$-families extending a given $H$-family $\famF$ and lifting a given group homomorphism $\mu$ are isomorphic if
\begin{itemize}
\item $G/H$ is free; or
\item the abelian group
$H^0(\OO_\stX^*)$ is divisible (which is equivalent to injective), for instance the group of units of an algebraically closed field,
\end{itemize}
as in both cases the $\Ext^1$-group in question is zero.
\end{remark}

\begin{corollary}
\label{prop:family-exists}
Let $\mu \colon G \to \Pic(\stX)$ be a group homomorphism.
Then the set of isomorphism classes of $G$-families lifting $\mu$ is in bijection with $\Ext^1(G,H^0(\OO_\stX^*))$. In particular there is a $G$-family of line bundles lifting $\mu$.
\end{corollary}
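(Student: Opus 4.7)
The plan is to deduce this corollary directly from Theorem \ref{thm:families-extension} by specialising to $H = 0$. The key observation is that a $\{0\}$-family on $\stX$ is essentially unique up to isomorphism: by \autoref{def:family-sheaves}, such a family consists of a sheaf $\famF_0$ equipped with an isomorphism $\xi_0 \colon \OO_\stX \to \famF_0$, and the multiplication $\xi_{0,0}$ is pinned down by the unit axiom. Hence, up to isomorphism, there is a unique $\{0\}$-family, namely the trivial one given by $\OO_\stX$ with its own multiplication, and it lifts the (necessarily trivial) homomorphism $\{0\} \to \Pic(\stX)$.

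Next I would observe that every $G$-family $\famG$ restricts, along the inclusion $\{0\} \hookrightarrow G$, to this trivial $\{0\}$-family via the designated isomorphism $\xi_0 \colon \OO_\stX \to \famG_0$. Consequently, every $G$-family lifting $\mu$ automatically extends the trivial $\{0\}$-family in the sense of Theorem \ref{thm:families-extension}, and $\mu$ itself extends the trivial homomorphism $\{0\} \to \Pic(\stX)$.

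Applying Theorem \ref{thm:families-extension} with $H = 0$ and $\famF$ the trivial $\{0\}$-family therefore yields both the existence of a $G$-family lifting $\mu$ and a bijection between isomorphism classes of such families and
\[
\Ext^1(G/\{0\},\,H^0(\OO_\stX^*)) \;=\; \Ext^1(G,\,H^0(\OO_\stX^*)).
\]
Since the preceding theorem does all the real work, there is no substantive obstacle in the argument; the only points to verify are the uniqueness up to isomorphism of the trivial $\{0\}$-family and the fact that every $G$-family restricts to it, and both follow immediately from the axioms of a family.
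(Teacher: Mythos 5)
Your proposal is correct and matches the paper's intent: the corollary is stated without proof immediately after Theorem~\ref{thm:families-extension} precisely because it is the specialisation to $H=0$, with the trivial $\{0\}$-family (which every $G$-family extends via $\xi_0$) and $G/\{0\}=G$. Your verification that the trivial $\{0\}$-family is unique up to isomorphism and that any $G$-family restricts to it is exactly the (routine) content the authors leave implicit.
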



\begin{definition} 
Let $\stX$ be a pseudo-algebraic fibered category.
A \emph{$\Pic$-Cox ring} $\PicRR(\stX)$ of $\stX$ is the Cox ring associated with a $\Pic(\stX)$-family lifting the identity $\Pic(\stX)\to \Pic(\stX)$. 
Analogously, we define a \emph{$\Pic$-Cox sheaf} $\PicRS(\stX)$ of $\stX$.
\end{definition}

Putting together \autoref{thm:families-extension} and \autoref{rem:family-uniqueness},
we obtain the following theorem.

\begin{theorem}
\label{thm:mainthm-pic}
Let $\stX$ be a pseudo-algebraic fibered category. 
Then $\stX$ admits a $\Pic$-Cox ring $\PicRR(\stX)$.
Moreover, a $\Pic$-Cox ring is unique up to isomorphism if
\[
\Ext^1(\Pic(\stX),H^0(\OO_\stX^*)) = 0,
\]
which, in particular, holds in the cases that
\begin{itemize} 
\item $\Pic(\stX)$ is free; or 
\item $H^0(\OO_\stX^*)$ is divisible, e.g. equal to $\kk^*$ for an algebraically closed field $\kk$.
\end{itemize}
\end{theorem}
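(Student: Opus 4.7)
The plan is to derive this theorem as an immediate specialization of \autoref{thm:families-extension} (equivalently \autoref{prop:family-exists}), combined with the vanishing criteria collected in \autoref{rem:family-uniqueness}. By definition, a $\Pic$-Cox ring arises from a $\Pic(\stX)$-family of line bundles lifting $\id_{\Pic(\stX)}$, so the existence and classification statements translate directly into questions about such families.

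Concretely, I would apply \autoref{prop:family-exists} with $G = \Pic(\stX)$ and $\mu = \id_{\Pic(\stX)}$ (alternatively \autoref{thm:families-extension} with $H = 0$ and $\famF$ the trivial $0$-family). This yields at once the existence of a $\Pic(\stX)$-family lifting the identity, and hence, by passing to global sections via $\famF \mapsto \RR_\famF$, the existence of a $\Pic$-Cox ring $\PicRR(\stX)$. The same statement identifies the set of isomorphism classes of such families with $\Ext^1(\Pic(\stX), H^0(\OO_\stX^*))$. By functoriality of the Cox ring construction, isomorphism classes of lifting families descend to isomorphism classes of Cox rings, giving the claimed bijection.

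For the uniqueness part, I invoke \autoref{rem:family-uniqueness}: the Ext group vanishes whenever either $\Pic(\stX)$ is free (hence projective as a $\ZZ$-module) or $H^0(\OO_\stX^*)$ is divisible (hence injective as an abelian group, since divisible and injective abelian groups coincide). The case $H^0(\OO_\stX^*) = \kk^*$ for an algebraically closed field $\kk$ falls into the second bullet because every nonzero element of $\kk$ admits $n$-th roots for all $n \geq 1$.

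There is essentially no obstacle here: the substantive work has already been absorbed into \autoref{thm:families-extension} and the machinery of \autoref{sec:families-torsors}. The only point to be mindful of is that passing from isomorphism classes of families to isomorphism classes of Cox rings preserves the $\Ext^1$-parametrization — but this is built into the structure, since morphisms of families in the sense of \autoref{def:family-sheaves} induce, and are encoded by, the ring-level automorphisms $\mu \in \Hom(\ZZ^{(G-0)}, H^0(\OO_\stX^*))$ used in the second half of the proof of \autoref{thm:families-extension}.
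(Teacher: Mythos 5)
Your proposal is correct and follows exactly the paper's route: the paper derives \autoref{thm:mainthm-pic} by simply combining \autoref{thm:families-extension} (applied with $G=\Pic(\stX)$, $H=0$, $\mu=\id$, i.e.\ \autoref{prop:family-exists}) with the vanishing criteria of \autoref{rem:family-uniqueness}. Your additional remark on why the $\Ext^1$-parametrization of families descends to isomorphism classes of Cox rings is a point the paper leaves implicit, so if anything you are slightly more careful than the original.
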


\begin{corollary}
\label{thm:piccox-welldefined}
Let $\stX$ be a pseudo-algebraic fibered category over an algebraically closed field $\kk$.
If $H^0(\OO_\stX)^* = \kk^*$ (e.g. if $\stX$ is proper, reduced and connected), then the 
$\Pic$-Cox ring $\PicRR(\stX)$
is well-defined up to graded isomorphisms, that is there exists a $\Pic(\stX)$-family lifing $\Pic(\stX)\xto \id \Pic(\stX)$ and it is unique up to isomorphism.
\end{corollary}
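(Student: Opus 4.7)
The plan is to derive the corollary as a direct specialization of \autoref{thm:mainthm-pic}. That theorem guarantees, with no additional hypothesis on $\stX$, the existence of a $\Pic(\stX)$-family $\famF$ lifting $\id_{\Pic(\stX)}\colon \Pic(\stX) \to \Pic(\stX)$, and describes the isomorphism classes of such families as a torsor under $\Ext^1(\Pic(\stX), H^0(\OO_\stX^*))$. So the task reduces to showing that this Ext group vanishes under the present hypothesis $H^0(\OO_\stX)^* = \kk^*$.

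For this, I would invoke the elementary fact that $\kk^*$ is a divisible abelian group whenever $\kk$ is algebraically closed: for every $a\in \kk^*$ and every $n\geq 1$, the polynomial $x^n-a$ has a root in $\kk$, so every element admits $n$-th roots. Divisibility is equivalent to injectivity in the category of abelian groups, so $\Ext^1(G,\kk^*)=0$ for every abelian group $G$. Specializing to $G=\Pic(\stX)$ and feeding this into the second bullet of \autoref{thm:mainthm-pic} gives both existence and uniqueness up to graded isomorphism, which is exactly the content of the corollary.

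The parenthetical example is a short standalone check: if $\stX$ is proper, reduced and connected over $\kk$, then $H^0(\stX,\OO_\stX)$ is a finite-dimensional (by properness), reduced (by reducedness), connected (by connectedness) $\kk$-algebra; such an algebra is a product of fields, and connectedness collapses the product to a single finite extension of $\kk$, which algebraic closedness forces to equal $\kk$ itself. Hence $H^0(\OO_\stX)^*=\kk^*$. There is no real obstacle here: once \autoref{thm:mainthm-pic} and the divisibility of $\kk^*$ are in place, the corollary is a matter of unwrapping notation, and the parenthetical remark is standard.
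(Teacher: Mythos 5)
Your proposal is correct and matches the paper's own route: the corollary is obtained by specializing \autoref{thm:mainthm-pic} and observing (as in \autoref{rem:family-uniqueness}) that $\kk^*$ is divisible, hence injective, so $\Ext^1(\Pic(\stX),H^0(\OO_\stX^*))=\Ext^1(\Pic(\stX),\kk^*)$ vanishes. Your standalone verification of the parenthetical example is also the standard argument and raises no issues.
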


\begin{example}
Let us show an example of a variety $X$ over a field $\kk$ with non isomorphic $\Pic$-Cox rings. We will pick an affine variety, in which case non ($\Pic(X)$-graded) isomorphic  $\Pic$-Cox rings just means non isomorphic $\Pic(X)$-family lifting $\Pic(X)\xto \id \Pic(X)$. By \autoref{thm:families-extension}, their isomorphism classes are parametrized by
\[
\Ext^1(\Pic(X),H^0(\OO_X)^*).
\]
Moreover we will pick an affine variety with $\Pic(X)\simeq \ZZ/d\ZZ$ with $d\geq 2$. In this case we must show that 
\[
\Ext^1(\Pic(X),H^0(\OO_X)^*) \simeq H^0(\OO_X)^*/(H^0(\OO_X)^*)^d \neq 0
\]
Consider an irreducible homogeneous polynomial $F\in \kk[x_0,\dots,x_n]$ of degree $d\geq 2$ and consider the complement $Y$ of the hypersurface defining by $F$ inside $\PP^n_\kk$. One has $\Pic(Y)=\ZZ/d\ZZ$.
If $(\kk^*)^d\neq \kk^*$, then $X=Y$ has the desired property.

In the case that $\kk$ is algebraically closed (and therefore $(\kk^*)^d = \kk^*$), we can choose $X=Y\times \Gm$, where $\Gm = \Spec \ZZ[t,t^{-1}]$. 
By \cite[Ex. III.12.6]{Hartshorne}, we get that $\Pic(X) \simeq \Pic(Y) \times \Pic(\Gm) \simeq \ZZ/d\ZZ$ is still cyclic of order $d$.
But $t\in H^0(\OO_X)^*$ is not a $d$-th power.
\end{example}

\begin{example}
\label{ex:nonunique:unique}
Let $\stX$ be a pseudo-algebraic fibered category.
Note that even if $\Ext^1(\Pic(\stX),H^0(\OO_\stX^*))$ is non-zero, $\stX$ might admit a unique $\Pic$-Cox ring. By \autoref{thm:families-extension}, the $\Ext$-group is in correspondence with the isomorphism classes of $\Pic$-Cox sheaves $\PicRS(\stX)$ of $\stX$. Still, when passing to the Cox ring by taking cohomology, all non-isomorphic Cox sheaves might become isomorphic Cox rings.

For example, consider $\stX = \BB \mu_m$ over $\kk = \mathbb{F}_p$ with $p$ a prime, where we find that $\Pic(\stX) \cong \ZZ/m\ZZ$ and $H^0(\OO_\stX^*) \cong \ZZ/(p-1)\ZZ$.
If $\gcd(m,p-1)>1$ then $\Ext^1(\Pic(\stX),H^0(\OO_\stX^*))$ is non-zero, hence by \autoref{thm:families-extension} there are non-isomorphic $\Pic$-Cox sheaves on $\stX$.
But $H^0(\stX,\shL) = 0$ for any non-trivial $\shL \in \Pic(\stX)$, so $\stX$ has a unique $\Pic$-Cox ring $\RR(\stX) \cong \mathbb{F}_p$.

It would be interesting to know, whether there are smooth projective varieties that give rise to similar examples.
\end{example}

We close this section with an example, that makes use of the general setting of \autoref{thm:mainthm-pic}.

\begin{example}
\label{ex:logarithm}
In \cite{TalpoVistoli} the authors attached to any logarithmic scheme $X$ an infinite root stack $\sqrt[\infty]{X}$, which is a limit of classical root stacks. 
Although $\sqrt[\infty]{X}$ is not algebraic in general, it admits an fpqc atlas by a scheme, that is, it is pseudo-algebraic in the sense of \autoref{def:pseudo-algebraic}. 
So \autoref{thm:mainthm-pic} applies to those general stacks, yielding a $\Pic$-Cox ring of $\sqrt[\infty]{X}$. 
Notice that $H^0(\OO_X) \cong H^0(\OO_{\sqrt[\infty]{X}})$, so that uniqueness of this Cox ring follows if $H^0(\OO_X)^*=\kk^*$ for an algebraically closed field $\kk$. 
\end{example}

\subsection{The Cox ring of reflexive sheaves}
\label{sec:refcox}

In this section, we extend \autoref{thm:families-extension} for families of reflexive sheaves of rank one. This will require additional conditions on $\stX$. 

If $\stX$ is an algebraic stack, we denote with $\overline{\Refl_1(\stX)}$ the subcategory of $\QCoh(\stX)$ of reflexive sheaves of rank $1$ (on the generic points of an atlas)  and with $\Refl_1(\stX)$ the set of their isomorphism classes. 

From now on let $\stX$ be a  Noetherian, normal and excellent algebraic stack and $\stU$ be its regular locus which is open and dense.
Notice that the complement of $\stU$ in $\stX$ has codimension at least $2$. We refer to \cite[\S 6]{Osserman} for the notion of codimension for algebraic stacks.

\begin{lemma}
\label{lem:restrictsmooth}
Let $j\colon \stV \into \stX$ be an open immersion whose complement has codimension at least $2$.
Then the restriction of sheaves
\[ 
j^* \colon \RRef_1(\stX) \to \RRef_1(\stV), \quad \shL \mapsto \shL_{|\stV} 
\]
is an equivalence of categories with $j_*$ its inverse.  
In particular,
\[
\RRef_1(\stX) \to \RRef_1(\stU)=\PPic(\stU), \quad \shL \mapsto \shL_{|\stU}
\]
is well defined and an equivalence. Moreover, $\Refl_1(\stX)$ is an abelian group isomorphic to $\Pic(\stU)$: given $\shL,\shL'\in \Refl_1(\stX)$ their product is $(\shL \otimes \shL')\twovee$ and the inverse of $\shL$ is $\shL^\vee$.
\end{lemma}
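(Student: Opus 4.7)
The plan is to reduce to the scheme case via a smooth atlas and invoke the standard Hartogs-type extension theorem for reflexive coherent sheaves on normal Noetherian schemes. The technical heart is showing that $j^*$ and $j_*$ are mutually inverse equivalences between $\RRef(\stX)$ and $\RRef(\stV)$; once this is in hand, the identification with $\PPic(\stU)$ and the group structure on $\Ref_1(\stX)$ will follow formally by specialising to the regular locus.

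First I would pick a smooth atlas $p\colon X\to\stX$ and set $V = p^{-1}(\stV)\subseteq X$, which is open with $\codim(X\setminus V)\geq 2$. Since reflexivity and the formation of $j_*$ and $j^*$ along an open immersion are local in the smooth topology, the statement reduces to the corresponding one for a normal Noetherian excellent scheme $X$ and $V$ as above. On such $X$, a coherent sheaf is reflexive if and only if it satisfies Serre's condition $(S_2)$. Given reflexive $\shF$ on $V$, one checks that $j_*\shF$ is coherent (this is where excellence intervenes) and $(S_2)$, hence reflexive on $X$; the counit $j^*j_*\shF\to\shF$ is tautologically an isomorphism, while the unit $\shG\to j_*j^*\shG$ for $\shG$ reflexive on $X$ is an isomorphism because its kernel and cokernel, being supported on $Z = X\setminus V$, are controlled by $H^0_Z(\shG)$ and $H^1_Z(\shG)$, both of which vanish when $\shG$ has depth $\geq 2$ along $Z$. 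Preservation of the rank-one condition is automatic.

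Specialising $\stV = \stU$, every rank-one reflexive sheaf on a regular Noetherian stack is invertible, so $\RRef_1(\stU) = \PPic(\stU)$; combining with the equivalence yields $\RRef_1(\stX)\simeq \PPic(\stU)$ and hence an abelian group structure on $\Ref_1(\stX)$ via $\Pic(\stU)$. To describe it explicitly: for $\shL,\shL'\in \Ref_1(\stX)$, the sheaf $\shL\otimes \shL'$ restricts to the line bundle $\shL_{|\stU}\otimes \shL'_{|\stU}$ on $\stU$, and applying $j_*$ returns the reflexive representative $(\shL\otimes \shL')\twovee$; indeed the biduality map $\shL\otimes \shL' \to (\shL\otimes \shL')\twovee$ is, fpqc-locally on $\stX$, the unit of $j^*\dashv j_*$ applied to the (possibly non-reflexive) tensor product, whose target must therefore coincide with $j_*(\shL_{|\stU}\otimes \shL'_{|\stU})$. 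Similarly $\shL^\vee = \SHom_\stX(\shL,\OO_\stX)$ is already reflexive and restricts to the inverse line bundle on $\stU$, so it represents the inverse class.

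The main obstacle will be the careful passage from schemes to stacks: verifying that $j_*$ on $\stX$ commutes with smooth pullback to the atlas (so that coherence and $(S_2)$ can be tested there), and that the excellence hypothesis is what ensures $j_*\shF$ remains coherent. Once this descent is in place, the rest is the classical depth-$2$ extension lemma and a tautological transport of structure.
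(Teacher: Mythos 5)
Your proposal is correct in substance and rests on the same classical inputs as the paper's proof, but the execution differs at two points worth recording. For essential surjectivity the paper does not try to show directly that $j_*\shF$ is coherent: it first extends $j_*\shF$ (which is merely quasi-coherent) to a \emph{coherent} sheaf $\shG$ on $\stX$ restricting to $\shF$, quoting \cite[Cor.~15.5]{LM}, and then replaces $\shG$ by its bidual $\shG\twovee$, which is reflexive by \cite[Cor.~1.2]{HaSRS} and still restricts to $\shF$; coherence of $j_*\shF$ is then a consequence (it equals $\shG\twovee$ by the unit isomorphism), not an input. Your route --- checking coherence and $(S_2)$ of $j_*\shF$ directly --- can be made to work, but as written it leans on a finiteness theorem for pushforwards, and your parenthetical attributing this to excellence is misplaced: excellence enters the paper only to guarantee that the regular locus $\stU$ is open (so that the ``in particular'' clause makes sense), whereas the coherent-extension-plus-bidual argument needs only Noetherian and normal. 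Two further small imprecisions: on a normal scheme ``reflexive'' is equivalent to ``torsion-free and $(S_2)$'', not to $(S_2)$ alone (a skyscraper sheaf is vacuously $(S_2)$); and for the unit $\shG\to j_*j^*\shG$ the paper simply cites \cite[Prop.~1.6]{HaSRS}, which encodes exactly your depth-$2$ local-cohomology argument. Your smooth-local reduction to the atlas and your explicit verification of the group law via $(\shL\otimes\shL')\twovee$ and $\shL^\vee$ are both fine and slightly more detailed than the paper, which dismisses the group structure as an easy consequence of \cite[Cor.~1.2]{HaSRS}.
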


\begin{proof}
Let us show that $j^*$ is essentially surjective. Let $\shL \in \PPic(\stV)$. Since $j_*\shL$ is a quasi-coherent sheaf with $(j_*\shL)_{|\stV}\simeq \shL$, 
from \cite[Cor. 15.5]{LM} there exists a coherent sheaf $\shF$ on $\stX$ 
such that $\shF_{|\stV}\simeq \shL$. Then the sheaf $\shF\twovee$ is reflexive by \cite[Cor 1.2]{HaSRS} and restricts to $\shL$.

By \cite[Prop 1.6]{HaSRS} and the hypothesis on the codimension it follows that $\shL\to j_*j^*\shL$ is an isomorphism for reflexive sheaves on $\stX$. In particular $j_*\colon \RRef_1(\stV)\to \RRef_1(\stX)$ is well defined and a quasi-inverse of $j_*\colon \RRef_1(\stX) \to \RRef_1(\stV)$.

For the second part one has $\RRef_1(\stU)=\PPic(\stU)$ because $\stU$ is regular. The last statement is an easy consequence of \cite[Cor 1.2]{HaSRS}.
\end{proof}

\begin{corollary}
\label{cor:restriction}
Let $j\colon \stU \into \stX$ be the inclusion of the regular locus.
The restriction induces an equivalence between the category of $G$-families of sheaves on $\stX$ and the category of $G$-families of invertible sheaves on $\stU$.
\end{corollary}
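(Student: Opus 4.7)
The plan is to deduce the corollary directly from Lemma \ref{lem:restrictsmooth}, which already supplies the equivalence between the underlying collections of sheaves. The real work is in transporting the family structure (the morphisms $\xi_0$, $\xi_{g,g'}$ and the compatibility diagrams) across this equivalence; the heart of the argument is that every relevant sheaf on $\stX$ is reflexive of rank one, so its isomorphism class and the morphisms between such sheaves are fully detected after restriction to $\stU$.

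For the restriction direction, let $\famF=(\famF_g,\xi_0,\xi_{g,g'})$ be a $G$-family on $\stX$. By Remark \ref{rem:reflexive-torsor} each $\famF_g$ is reflexive, and since $\stU$ is regular the restrictions $j^*\famF_g$ are invertible. The morphisms $\xi_0$ and $\xi_{g,g'}$ restrict verbatim. Flatness of $j$ ensures that $j^*\SHom_\stX(\famF_{g'},\famF_{g+g'})\simeq\SHom_\stU(j^*\famF_{g'},j^*\famF_{g+g'})$, so the pulled-back $\omega_{g,g'}$ remains an isomorphism. The associativity, commutativity, and unit diagrams are preserved by $j^*$ because it is a functor, so $j^*\famF$ is a $G$-family of line bundles on $\stU$.

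For the quasi-inverse, given a $G$-family $\famF'$ of invertible sheaves on $\stU$, set $\famG_g\coloneqq j_*\famF'_g$; by Lemma \ref{lem:restrictsmooth} these are reflexive of rank one, and $j_*\OO_\stU=\OO_\stX$, so $j_*\xi'_0$ provides $\xi_0\colon\OO_\stX\to\famG_0$. For the bilinear maps, I push forward $\famF'_g\otimes\famF'_{g'}\to\famF'_{g+g'}$ and precompose with the natural map $j_*\famF'_g\otimes j_*\famF'_{g'}\to j_*(\famF'_g\otimes\famF'_{g'})$ to obtain $\xi_{g,g'}$. To check that the resulting $\omega_{g,g'}\colon\famG_g\to\SHom_\stX(\famG_{g'},\famG_{g+g'})$ is an isomorphism, note that both source and target are reflexive (the $\SHom$ of reflexive sheaves is reflexive), so by Lemma \ref{lem:restrictsmooth} it suffices to verify this after restricting to $\stU$, where it recovers the original $\omega'_{g,g'}$, an isomorphism by hypothesis. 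The three compatibility diagrams for $\famG$ consist of equalities of morphisms between reflexive sheaves on $\stX$, so by Lemma \ref{lem:restrictsmooth} they hold since they do so after applying $j^*$. Finally, the fact that $j^*$ and $j_*$ are quasi-inverse on the underlying sheaves (Lemma \ref{lem:restrictsmooth}) upgrades to a quasi-inverse pair on families, because the natural isomorphisms $\shL\simeq j_*j^*\shL$ are compatible with all the structure maps we defined.

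The only potentially delicate point is that $j_*$ does not a priori commute with $\otimes$ or $\SHom$. I avoid this obstacle by never requiring such commutation to hold on the nose: I only use that (i) reflexive sheaves on $\stX$ are determined by their restriction to $\stU$, (ii) $\SHom$ of reflexive rank one sheaves is reflexive, and (iii) the morphisms I produce on $\stX$ restrict to the originally given morphisms on $\stU$. These three ingredients let every check on $\stX$ be reduced to the corresponding check on $\stU$.
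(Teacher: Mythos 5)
Your proof is correct and takes essentially the same route as the paper's: the restriction direction is handled by flatness of $j$ (the paper cites \autoref{rem:pullback-families}), and the quasi-inverse is obtained by pushing forward and checking that $\omega_{g,g'}$ remains an isomorphism by reducing to $\stU$, using that both sides are reflexive of rank one. The only quibble is that the compatibility diagrams involve morphisms out of tensor products of reflexive sheaves, which need not themselves be reflexive, so the reduction to $\stU$ there is better justified by the adjunction $\Hom_\stX(\shF, j_*\shG)\simeq\Hom_\stU(j^*\shF,\shG)$ than by \autoref{lem:restrictsmooth}; this is a minor point that the paper's own proof does not address at all.
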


\begin{proof}
By \autoref{rem:pullback-families}, the restriction preserves families as $j \colon \stU \into \stX$ is flat.
Let $\famF$ be a $G$-family of invertible sheaves on $\stU$. Pushing forward $\famG=j_*\famF$ we get all the data of a family of sheaves on $\stX$. The only condition missing is the fact that $\famG_g\to \SHom_\stX(\famG_{g'},\famG_{g+g'})$ is an isomorphism. Since this map restrict to the corresponding map for the family $\famF$, it is enough to observe that
\[
\SHom_\stX(\famG_{g'},\famG_{g+g'}) \simeq j_* \SHom_\stU(\famF_{g'},\famF_{g+g'})
\]
is a reflexive sheaf of rank $1$.
\end{proof}

\begin{corollary}
\label{cor:ref-cl-same}
Let $X$ be a Noetherian, normal and excellent scheme. Then
\[
\Refl_1(X)\simeq \Cl(X) \simeq \Pic (U)
\]
where $\Cl(X)$ is the group of Weil divisors modulo equivalence and $U$ is the regular locus of $X$.
\end{corollary}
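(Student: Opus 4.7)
The plan is to combine \autoref{lem:restrictsmooth} with classical facts about divisors on normal schemes. The first isomorphism $\Ref_1(X) \simeq \Pic(U)$ is essentially immediate: applying \autoref{lem:restrictsmooth} with $\stX = X$ and $\stV = \stU$ (the regular locus, which is open and dense, and whose complement has codimension at least $2$ because $X$ is Noetherian, normal and excellent) gives an equivalence $j^* \colon \RRef_1(X) \to \RRef_1(U) = \PPic(U)$, hence an isomorphism on isomorphism classes with the stated group structure.

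For the isomorphism $\Cl(X) \simeq \Pic(U)$, I would proceed in two steps. First, I would show $\Cl(X) \simeq \Cl(U)$: since the closed complement $Z = X \setminus U$ has codimension at least $2$, it contains no codimension-one points, and therefore restriction induces a bijection between Weil divisor groups $\WDiv(X) \isom \WDiv(U)$ (the sums range over the same set of codimension-one points). Since $U \subseteq X$ is open and dense in a Noetherian normal scheme, the fields of rational functions agree, $K(X) = K(U)$, and the principal divisors correspond under this restriction. Passing to the quotient then gives $\Cl(X) \isom \Cl(U)$. Second, since $U$ is regular (and in particular locally factorial), every Weil divisor on $U$ is Cartier, so the natural map $\Cl(U) \to \Pic(U)$ sending $[D]$ to $[\OO_U(D)]$ is an isomorphism by the standard correspondence between Cartier divisor classes and invertible sheaves. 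Composing yields $\Cl(X) \simeq \Pic(U)$.

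Nothing here is genuinely hard; the only points that deserve care are verifying that the restriction on Weil divisors is well defined and surjective (which reduces to noting that each prime divisor of $U$ has a unique closure in $X$ that is a prime divisor of $X$, by codimension), and checking that principal divisors correspond via the equality of function fields. The isomorphism $\Ref_1(X) \simeq \Cl(X)$ advertised in the statement then arises as the composition of the two established isomorphisms, and is compatible with the classical assignment $[D] \mapsto [\OO_X(D) = j_*\OO_U(D|_U)]$ on a normal scheme.
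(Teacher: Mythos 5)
Your proposal is correct and follows essentially the same route as the paper: the paper's one-line proof also decomposes the claim as $\Cl(X)=\Cl(U)=\Pic(U)$ (codimension $\geq 2$ complement, then regularity of $U$) together with $\Ref_1(X)=\Pic(U)$ via the restriction equivalence. You merely spell out the standard details and cite \autoref{lem:restrictsmooth} directly where the paper points to \autoref{cor:restriction}, which is an immaterial difference.
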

\begin{proof}
Indeed one has $\Cl(X)=\Cl(U)=\Pic(U)$ and $\Refl_1(X)=\Pic(U)$ by \autoref{cor:restriction}.
\end{proof}

\begin{definition} 
Let $\stX$ be a noetherian, normal and excellent algebraic stack.
A \emph{$\Refl_1$-Cox ring} $\RefRR(\stX)$ of $\stX$ is a Cox ring associated with a $\Refl_1(\stX)$-family lifting the identity of $\Refl_1(\stX)$.
Analogously, we define a \emph{$\Refl_1$-Cox sheaf} $\RefRS(\stX)$ of $\stX$.
\end{definition}

As an application we get the following theorem.

\begin{theorem} 
\label{prop:picref}
If $\stX$ is a noetherian, normal and excellent algebraic stack then \autoref{thm:mainthm-pic} and \autoref{thm:piccox-welldefined} hold if we replace $\Pic$ by $\Refl_1$. 
\end{theorem}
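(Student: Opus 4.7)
The plan is to reduce \autoref{prop:picref} to \autoref{thm:mainthm-pic} applied to the regular locus $\stU \subseteq \stX$, via the equivalences of \autoref{lem:restrictsmooth} and \autoref{cor:restriction}. Since $\stX$ is noetherian, normal and excellent, $\stU$ is open and dense with complement of codimension at least $2$, and $\stU$ is itself a pseudo-algebraic fibered category, being an open substack of an algebraic stack.

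First I would invoke \autoref{lem:restrictsmooth} to obtain the group isomorphism $\Ref_1(\stX)\simeq\Pic(\stU)$ together with the equivalence $j_*\colon\RRef_1(\stU)\isom\RRef_1(\stX)$, and then \autoref{cor:restriction} to promote this to an equivalence between $\Ref_1(\stX)$-families of reflexive sheaves on $\stX$ and $\Pic(\stU)$-families of line bundles on $\stU$. For any such family $\famF$ on $\stX$, the isomorphism $\famF_g\simeq j_*(\famF_g|_\stU)$ yields $H^0(\stX,\famF_g)\simeq H^0(\stU,\famF_g|_\stU)$ for every $g$, so the associated Cox rings $\RR_\famF$ and $\RR_{j^*\famF}$ agree as $G$-graded rings (and similarly for Cox sheaves, after applying $j_*$).

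Next, since $\OO_\stX$ is itself reflexive of rank one, applying \autoref{lem:restrictsmooth} to it gives $\OO_\stX=j_*\OO_\stU$, whence $H^0(\OO_\stX)=H^0(\OO_\stU)$ and therefore $H^0(\OO_\stX^*)=H^0(\OO_\stU^*)$. Consequently the parametrising Ext-groups match:
\[
\Ext^1(\Ref_1(\stX),H^0(\OO_\stX^*))\simeq\Ext^1(\Pic(\stU),H^0(\OO_\stU^*)).
\]
Applying \autoref{thm:mainthm-pic} to $\stU$ with $G=\Pic(\stU)$ produces a $\Pic(\stU)$-family of line bundles on $\stU$ lifting the identity, whose image under $j_*$ is the desired $\Ref_1$-Cox ring (resp.\ sheaf) of $\stX$; the bijection with the above Ext-group and the uniqueness criteria of \autoref{thm:piccox-welldefined} transfer directly, the hypothesis $H^0(\OO_\stX^*)=\kk^*$ (e.g.\ for $\stX$ proper, reduced and connected over an algebraically closed field $\kk$) being inherited from the same property on $\stU$ via the equality above.

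I do not anticipate any essential obstacle, since the heavy lifting has already been done in \autoref{lem:restrictsmooth} and \autoref{cor:restriction}. The only point warranting care is the compatibility of $j_*$ with the multiplication maps $\xi_{g,g'}$ and with the reflexivity condition $\omega_{g,g'}$ of a family, but this is precisely what is verified inside the proof of \autoref{cor:restriction} (using that $\SHom_\stX(j_*\shL,j_*\shL')\simeq j_*\SHom_\stU(\shL,\shL')$ is reflexive of rank one when $\shL,\shL'$ are line bundles on $\stU$), so no further argument is required.
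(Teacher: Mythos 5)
Your proposal is correct and follows essentially the same route as the paper: the paper's own proof simply notes $\Ref_1(\stX)\simeq\Pic(\stU)$ and $H^0(\stX,\OO_\stX)^*=H^0(\stU,\OO_\stU)^*$ and then cites \autoref{cor:restriction}, which is exactly your reduction to the regular locus. You merely spell out the details (matching of global sections and Ext-groups, compatibility of $j_*$ with the family structure) more explicitly than the paper does.
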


\begin{proof}
One has $\Pic(\stX)=\Pic(\stU)$ and $H^0(\stX,\OO_\stX)^*=H^0(\stU,\OO_\stU)^*$. So everything follows from \autoref{cor:restriction}.
\end{proof}

We end this subsection by a comparison between $\Refl_1$-Cox sheaf and $\Pic$-Cox sheaf.
Recall that for an $\agH$-torsor $\torP$ over a pseudo-algebraic fibered category $\stX$, we have $\stX = [\torP/\agH]$. In particular for the $\Pic$-Cox ring we get that
\[
\stX = \stackquot{\Spec_\stX \PicRS(\stX)}{\D{\Pic(\stX)}}.
\]

\begin{proposition}
\label{prop:Xcan-proper}
Let $\stX$ be a Noetherian, normal and excellent algebraic stack and assume that $T=\Refl_1(\stX)/\Pic(\stX)$ is finite. 
Then there is a factorization
\[
[\Spec_\stX \RefRS(\stX)/\D{\Refl_1(\stX)}]\to \stY \to \stX,
\]
where the first map is finite and $\stY\to \stX$ is a (relative) $\D T$-gerbe. In particular, the above map is proper.
\end{proposition}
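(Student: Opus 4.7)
The plan is to factor the map through $\stY \coloneqq [\Spec_\stX \PicRS(\stX)/\D{\Ref_1(\stX)}]$ and identify the latter with the trivial $\D{T}$-gerbe $\BB_\stX \D{T}$. The starting observation is that the $\Ref_1(\stX)$-family $(\shF_g)$ underlying $\RefRS(\stX)$ restricts to a $\Pic(\stX)$-family, whose Cox sheaf $\PicRS(\stX) = \bigoplus_{h \in \Pic(\stX)} \shF_h$ sits as a $\D{\Ref_1(\stX)}$-equivariant $\Ref_1(\stX)$-graded subalgebra of $\RefRS(\stX)$, supported on $\Pic(\stX) \subset \Ref_1(\stX)$. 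This yields a $\D{\Ref_1(\stX)}$-equivariant morphism $\Spec_\stX \RefRS(\stX) \to \Spec_\stX \PicRS(\stX)$.

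To see this morphism is finite, I would choose coset representatives $g_t \in \Ref_1(\stX)$ of each $t \in T$, with $g_0 = 0$. Since $\shF_h$ is invertible for $h \in \Pic(\stX)$, the family structure provides isomorphisms $\shF_{g_t} \otimes \shF_h \isom \shF_{g_t+h}$ (compare \autoref{rem:reflexive-torsor}), whence
\[
\RefRS(\stX) \simeq \bigoplus_{t \in T} \shF_{g_t} \otimes_{\OO_\stX} \PicRS(\stX)
\]
as $\PicRS(\stX)$-modules. Because $T$ is finite and each $\shF_{g_t}$ is finitely presented, $\RefRS(\stX)$ is a finite $\PicRS(\stX)$-module, so $\Spec_\stX \RefRS(\stX) \to \Spec_\stX \PicRS(\stX)$ is finite. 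Descending to the $\D{\Ref_1(\stX)}$-quotients, the base change along the fpqc presentation $\Spec_\stX \PicRS(\stX) \to \stY$ recovers this finite morphism, so the induced map $[\Spec_\stX \RefRS(\stX)/\D{\Ref_1(\stX)}] \to \stY$ is finite by fpqc descent of finiteness.

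It remains to identify $\stY$. The $\D{\Ref_1(\stX)}$-action on $\Spec_\stX \PicRS(\stX)$ factors through the quotient $\D{\Ref_1(\stX)} \twoheadrightarrow \D{\Pic(\stX)}$ (since $\PicRS(\stX)$ is $\Pic(\stX)$-graded inside its $\Ref_1(\stX)$-grading), so the subgroup $\D{T}$ acts trivially. Combined with $\stX = [\Spec_\stX \PicRS(\stX)/\D{\Pic(\stX)}]$ (the defining property of the $\Pic$-Cox sheaf recalled just before the proposition), this gives $\stY = \BB_\stX \D{T}$, the trivial $\D{T}$-gerbe. Finiteness of $T$ means $\D{T} = \Spec \ZZ[T]$ is a finite flat group scheme over $\ZZ$, so $\stY \to \stX$ is proper; together with the finite first map, the composition is proper.

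The step I expect to be the main obstacle is the stack-theoretic identification $\stY = \BB_\stX \D{T}$ when $\D{T}$ acts trivially, i.e.\ formally splitting the quotient as a classifying-stack factor for the trivially-acting kernel. A cleaner alternative is to derive this from \autoref{prop:superwichtig} applied to $1 \to \D{T} \to \D{\Ref_1(\stX)} \to \D{\Pic(\stX)} \to 1$ with $\gsR = \OO_\stX$ viewed as a trivially $\D{T}$-equivariant algebra.
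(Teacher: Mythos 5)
Your factorization through $\stY=[\Spec_\stX\PicRS(\stX)/\D{\Ref_1(\stX)}]$ and the proof that the first map is finite are correct and essentially the paper's own argument: $\RefRS(\stX)\simeq\bigoplus_{t\in T}\shF_{g_t}\otimes_{\OO_\stX}\PicRS(\stX)$ as $\PicRS(\stX)$-modules, so the map of relative spectra is finite, and finiteness descends along the fpqc cover $\Spec_\stX\PicRS(\stX)\to\stY$.

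The gap is in the identification of $\stY$. You claim it is the \emph{trivial} gerbe $\BB_\stX\D{T}$, reasoning that since $\D{T}$ acts trivially one may first quotient by $\D{\Pic(\stX)}$ and then by $\D{T}$. But quotienting in stages goes the other way: for a normal subgroup $\agH\trianglelefteq\agG$ one has $[\torP/\agG]\simeq[[\torP/\agH]/(\agG/\agH)]$, not $[[\torP/(\agG/\agH)]/\agH]$. Unwinding the definition, an object of $\stY$ over $T\to\stX$ is a $\D{\Ref_1(\stX)}$-torsor $\torQ$ together with an isomorphism of $\torQ/\D{T}$ with the restriction of $\Spec_\stX\PicRS(\stX)$; that is, $\stY=\stX\times_{\BB\D{\Pic(\stX)}}\BB\D{\Ref_1(\stX)}$ is the gerbe of liftings of the $\Pic$-Cox torsor along $\D{\Ref_1(\stX)}\onto\D{\Pic(\stX)}$. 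This is a $\D{T}$-gerbe --- which is all the statement requires --- but it is trivial only if such a lifting exists globally, which by \autoref{rem:torsor-families} and \autoref{thm:families-extension} forces the inclusion $\Pic(\stX)\into\Ref_1(\stX)$ to admit a retraction; this can fail even with $T$ finite (e.g.\ $2\ZZ\subset\ZZ$), so nothing in your argument excludes a non-trivial gerbe. Your proposed repair via \autoref{prop:superwichtig} with $\gsR=\OO_\stX$ does not close this: by \autoref{rem:push-pull-diag} one computes $\hat\alpha_*\OO_\stX=\bigoplus_{h\in\Pic(\stX)}\OO_\stX=\OO_\stX[\Pic(\stX)]$, the group algebra, not $\PicRS(\stX)$, so that proposition identifies $\BB_\stX\D{T}$ with $[\Spec_\stX\OO_\stX[\Pic(\stX)]/\D{\Ref_1(\stX)}]$ and says nothing about $\stY$. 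The paper's route is to observe that $\stY\to\stX$ is the base change of $\BB\D{\Ref_1(\stX)}\to\BB\D{\Pic(\stX)}$ along the classifying map $\stX\to\BB\D{\Pic(\stX)}$ of the torsor $\Spec_\stX\PicRS(\stX)$, hence a (possibly non-trivial) relative $\D{T}$-gerbe, and then to deduce properness from the properness of $\BB\D{T}\to\Spec\ZZ$; you should replace the triviality claim by this base-change argument.
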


\begin{proof}
Note that we can realize $\RS' \coloneqq \PicRS(\stX)$ as a graded subsheaf of algebras of $\RS \coloneqq \RefRS(\stX)$, since $\Pic(\stX) \subseteq \Refl_1(\stX)$.
There is an exact sequence
\[
1 \to \D T \to \D{\Refl_1(\stX)}\to \D{\Pic(\stX)}\to 1
\]
and, moreover, we have a Cartesian diagram
\[
\begin{tikzcd}
\Spec_\stX \RS \ar[r, "\alpha"] \ar[d] & \Spec_\stX \RS' \ar[d, shift right=4] \\
\stackquot{\Spec_\stX \RS}{\D{\Refl_1(\stX)}} \ar[r, "\overline \alpha"] & \stackquot{\Spec_\stX \RS'}{\D{\Refl_1(\stX)}} \eqqcolon \stY
\end{tikzcd}
\]
because $\alpha$ is $\D{\Refl_1(\stX)}$-equivariant. 
In particular, the induced map $\overline \alpha$ is finite, if $\alpha$ is finite.

To see that $\alpha$ is finite, we can actually assume that $\stX$ is affine (but keeping the original groups $\Pic(\stX)$ and $\Refl_1(\stX)$). 
If we consider $\RS$ as an $\RS'$-module, then it is generated by $\OO_\stX$-generators of the $\RS_q$, where $q$ runs through a system of generators of $T$ in $\Refl_1(\stX)$. For this note that if $q\in \Refl_1(\stX)$ and $t\in \Pic(\stX)$ then 
$\RS_{t+q} \simeq \RS'_t \otimes \RS_q$
by \autoref{rem:reflexive-torsor}. Now consider the Cartesian diagram
\[
\begin{tikzcd}
\stY = [\Spec_\stX \RS'/\D{\Refl_1(\stX)}] \ar[d, shift left=4] \ar[r] & \BB \D{\Refl_1(\stX)} \ar[d] \\
\stX = [\Spec_\stX \RS'/\D{\Pic(\stX)}] \ar[r] & \BB \D{\Pic(\stX)}
\end{tikzcd}
\]
The equation in the bottom left corner holds since $\Spec_\stX \RS'$ is a $\D{\Pic(\stX)}$-torsor over $\stX$. 
Hence $\stY\to \stX$ is a base change of $\BB \D{\Refl_1(\stX)} \to \BB\D{\Pic(\stX)}$, which is a relative $\D T$-gerbe.

For the last claim we observe that $\D T \to \Spec \ZZ$ is proper. Indeed it is separated because the diagonal of $\D T$ is (fppf) locally of the form $\D T \to \Spec \ZZ$, which is finite. Moreover $\BB \D T\to \Spec \ZZ$ is a universal homeomorphism, hence universally closed.
\end{proof}

\subsection{Divisorial approach for the Cox ring}
\label{sec:confrontation}

In this section, we show that our construction of a Cox ring is a generalisation of the contruction in \cite[\S 1.4]{Coxrings} for varieties.

Let $X$ be an integral, Noetherian, normal scheme and choose a short exact sequence
\[
0\to K_1\to K_0 \to \Cl(X) \to 0
\]
of abelian groups where $K_0$ and, therefore, $K_1$ are free, where $\Cl(X)$ is the group of Weil divisors modulo equivalence. Recall that $\Cl(X)\simeq \Refl_1(X)$ and they are both isomorphic to $\Cl(U)\simeq \Pic(U)$, where $U$ is the regular locus of $X$ (see \autoref{cor:ref-cl-same}).

We can lift $K_0\to \Cl(X)$ to a map $E\colon K_0\to \WDiv(X), k \mapsto E_k$, where $\WDiv(X)$ is the group of Weil divisors on $X$. This defines a $K_0$-family $\famG$ of reflexive sheaves of rank one with components
\[
\famG_k = \OO_X(E_k),
\]
where the multiplication is given by the usual multiplication of rational sections in $\kk(X)$.
The Cox sheaf of algebras associated with the $K_0$-family $\famG$ is
\[
\SS(X) \coloneqq \RS_\famG = \bigoplus_{k \in K_0} \famG_k
\]
Note that $\SS(X)$ is the sheaf of divisorial algebras for the surjection $K_0 \onto \Cl(X)$,
as introduced in \cite[\S 1.3.1]{Coxrings}.

By construction $\famG_{|K_1}$ is trivial.
A trivialization $\xi_k\colon \famG_k\to \OO_X$ is induced by a group homomorphism
\[
\zeta\colon K_1 \to \kk(X)^*
\]
such that $\div(\zeta(k))=E_k$, so that $\xi_k(f)=f/\zeta(k)$. 
Consider the ideal
\[
\gsI = \langle \zeta(k)-1 \mid k\in K_1\rangle \subseteq \bigoplus_{k\in K_1}\OO_X(E_k)
\]
where $\zeta(k)\in \OO_X(E_k)$, more precisely $\OO_X(E_k)=\zeta(k)\OO_X$ and
\[
\RS = \RS_\famG / \gsI \RS_\famG.
\]
This gives exactly the Cox sheaf $\RS(X)$ as constructed in \cite[\S 1.4.2]{Coxrings}.
We claim that $\RS$ is the 
$\Refl_1$-Cox sheaf $\RefRS(X)$.
Note that here $\Refl_1(X) = \Cl(X)$.

We make use of \autoref{cor:restriction}. Set $\famF$ for the $K_0$-family of line bundles on the regular locus $U$ restriction of $\famG$. In particular $\RS_\famF=(\RS_\famG)_{|U}$. The trivialization of $\famG_{|K_1}$ defines a trivialization of $\famF_{|K_1}$.

By \autoref{rem:induced-explicit} we obtain a $\Pic(U)$-family $\overline \famF$ of line bundles on $U$ and therefore a $\Cl(X)$-family $\overline \famG$ of reflexive sheaves on $X$ such that $\overline \famG_{|U}=\famF$. Again by  \autoref{rem:induced-explicit}, we have
\[
(\RS_{\overline \famG})_{|U} \simeq \RS_{\overline \famF} \simeq \RS_{|U}
\]

In order to conclude that $\RS_{\overline \famG} \simeq \RS$, it is enough to notice that $\RS$ is $\Cl(X)$-graded and each graded piece is isomorphic to a graded piece of $\RS_\famG$ and therefore reflexive of rank $1$.

More precisely we have 
\begin{equation}
\label{eq:divisorial algebras}
(\RS_\famG)_k=\famG_k = \OO_X(E_k) \simeq \RS_{\pi(k)}\text{ for } k \in K_0
\end{equation}
where $\pi\colon K_0\to \Cl(X)$.

\section{Applications to Mori dream stacks}
\label{sec:application}

In this section, let $\kk$ be an algebraically closed field of characteristic zero.
Let $X$ be a normal integral scheme of finite type over $\kk$ such that
$X$ has only constant invertible functions and a finitely generated divisor class group. 

As in \autoref{sec:confrontation},  let $\RS(X) \coloneqq \RefRS(X)$ be the $\Refl_1$-Cox sheaf and $\SS(X)$ the sheaf of divisorial algebras appearing in the construction of $\RR(X) \coloneqq \RefRR(X)$, after choosing some surjection $K_0 \onto \Cl(X)$ and a lift $K_0 \to \WDiv(X)$.   

\begin{proposition}
\label{thm:rs}
In the above situation, $\SS(X) \onto \RS(X)$ induces an equivalence of quotient stacks
\[  
\stackquot{\Spec_X \RS(X)}{\D {\Cl(X)}} 
\simeq  
\stackquot{\Spec_X \SS(X)}{\D{K_0}}.
\] 
\end{proposition}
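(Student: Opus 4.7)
The plan is to apply \autoref{prop:superwichtig} to the short exact sequence of flat affine group schemes over $\ZZ$ obtained by Cartier dualizing the free presentation $0 \to K_1 \to K_0 \xto{\pi} \Cl(X) \to 0$.

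First, dualizing yields
\[
1 \to \D{\Cl(X)} \xto{\D\pi} \D{K_0} \to \D{K_1} \to 1,
\]
an exact sequence of flat affine group schemes over $\ZZ$: the outer groups are tori because $K_0$ and $K_1$ are free by \autoref{prop:free-res}, and $\D\pi$ is a closed immersion because $\pi$ is surjective. Set $\agH = \D{\Cl(X)}$, $\agG = \D{K_0}$, and $\alpha = \D\pi$. The $\Cl(X)$-graded sheaf of algebras $\gsR \coloneqq \RS(X)$ then defines an object of $\Alg^{\agH}(X)$, and the functor $\hat\alpha_*$ of \autoref{rem:equivalences-qcoh} produces $\gsS \coloneqq \hat\alpha_*\gsR \in \Alg^{\agG}(X)$.

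The crucial step is to identify $\gsS$ with $\SS(X)$ as a $K_0$-graded sheaf of algebras. By \autoref{rem:push-pull-diag} applied to $\pi$, the pushforward $(\BB\D\pi)_*$ takes a $\Cl(X)$-graded sheaf $\shG = \bigoplus_{c} \shG_c$ to the $K_0$-graded sheaf $\bigoplus_{k \in K_0} \shG_{\pi(k)}$. Combining this with the isomorphism $\RS(X)_{\pi(k)} \simeq \OO_X(E_k)$ of \eqref{eq:divisorial algebras} yields an identification $\hat\alpha_*\RS(X) \simeq \bigoplus_{k \in K_0}\OO_X(E_k) = \SS(X)$ of $K_0$-graded sheaves. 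Multiplicativity of this identification follows from the construction of $\RS(X)$ out of $\SS(X)$ described in \autoref{rem:induced-explicit}, namely $\RS(X) = \SS(X)/\gsI\SS(X)$: the multiplication on the graded pieces of $\RS(X)$ is inherited from that on $\SS(X)$, so the componentwise identifications above are compatible with multiplication.

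With this identification in place, \autoref{prop:superwichtig} applied to the above exact sequence with $\gsR = \RS(X)$ and $\gsS = \SS(X)$ gives the desired equivalence
\[
\stackquot{\Spec_X \RS(X)}{\D{\Cl(X)}} \simeq \stackquot{\Spec_X \SS(X)}{\D{K_0}}
\]
over $\BB_X \D{K_0}$, and unwinding the construction shows that this equivalence is precisely the one induced by the surjection $\SS(X) \onto \RS(X)$. The main obstacle is verifying the multiplicativity of the identification $\hat\alpha_*\RS(X) \simeq \SS(X)$; once that is established, the conclusion is essentially immediate from \autoref{prop:superwichtig}.
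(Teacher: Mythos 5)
Your proof is correct and follows exactly the paper's argument: apply \autoref{prop:superwichtig} to the dualized free presentation $1 \to \D{\Cl(X)} \to \D{K_0} \to \D{K_1} \to 1$ with $\gsR = \RS(X)$, and identify $\hat\alpha_*\RS(X)$ with $\SS(X)$ via \autoref{rem:push-pull-diag} and \eqref{eq:divisorial algebras}. The only difference is that you spell out the multiplicativity of this identification, which the paper leaves implicit.
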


\begin{proof}
We  apply \autoref{prop:superwichtig} to the short exact sequence 
\[
1\to \D {\Cl(X)} \xto{\alpha} \D{K_0} \to \D{K_1} \to 1
\]
and to $\gsR = \RS(X)$. By \autoref{rem:push-pull-diag} and \eqref{eq:divisorial algebras} of \autoref{sec:confrontation}
we can conclude that $\hat \alpha_* \gsR = \SS(X)$, so that the statement follows.
\end{proof}

\begin{lemma}
\label{lem:Xcan-regular}
In the situation above,
the ring $\SR(X)$ is a unique factorization domain and $\Spec_X \SS(X)$ is a locally factorial scheme of finite type over $\kk$.

If $X$ has affine diagonal (e.g.\ separated), then $\Spec_X \SS(X) \to \Spec \SR(X)$ is an open immersion whose complement has codimension at least $2$. 
\end{lemma}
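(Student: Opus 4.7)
The plan is to establish the four assertions of the lemma—finite type, local factoriality of $Y \coloneqq \Spec_X \SS(X)$, UFD for $\SR(X)$, and the open immersion under affine diagonal—by reducing everything to the regular locus, where $Y$ acquires the structure of a split-torus torsor.

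Since $\Cl(X)$ is finitely generated by hypothesis, I take $K_0 \simeq \ZZ^n$ finitely generated free. Then $\SS(X) = \bigoplus_{k\in K_0}\OO_X(E_k)$ is a finitely generated $\OO_X$-algebra (generated by the coherent sheaves $\OO_X(\pm E_{e_i})$ for a basis $\{e_i\}$ of $K_0$), so $\pi\colon Y\to X$ is of finite type and thus $Y$ is of finite type over $\kk$. Restricting to the regular locus $j\colon \stU \into X$ (with complement of codim $\geq 2$), each $\OO_\stU(E_k|_\stU)$ is invertible, so by \autoref{cor:restriction} $\SS(X)|_\stU$ is the Cox sheaf of a $K_0$-family of line bundles, and by \autoref{prop:torsor-diag} $Y_\stU \coloneqq Y|_\stU \to \stU$ is a $\D{K_0}$-torsor. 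Since $K_0$ is free, $\D{K_0}\simeq \Gm^n$ is a split torus and the torsor is Zariski-locally trivial, so $Y_\stU$ is regular; as $\pi$ is affine of finite type with generic fibres of dimension $n$, the closed set $Y \setminus Y_\stU$ has codim $\geq 2$ in $Y$.

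From this reduction, normality of $Y$ follows from Serre's criterion: $R_1$ from $Y_\stU$ being regular, dense, and having codim $\geq 2$ complement; and $S_2$ from $\SS(X)$ being a direct sum of reflexive rank-one sheaves on the normal $X$. The Picard sequence $K_0 \to \Pic(\stU) \to \Pic(Y_\stU) \to 0$ for the split-torus torsor, combined with the surjection $K_0 \onto \Cl(X) \simeq \Pic(\stU)$, yields $\Pic(Y_\stU) = \Cl(Y_\stU) = 0$; by codim $\geq 2$ this upgrades to $\Cl(Y) = 0$. On a normal scheme with trivial class group, every Weil divisor is principal hence Cartier, so $Y$ is locally factorial. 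For $\SR(X) = \Gamma(Y,\OO_Y)$, one observes that it is a $K_0$-graded integral domain embedded in the Laurent polynomial ring $\kk(X)[K_0]$; the classical divisorial argument shows that each nonzero homogeneous element factors uniquely into homogeneous primes indexed by prime Weil divisors of $X$; and since $K_0$ is free, a $K_0$-graded integral domain that is homogeneously UFD is UFD in the ungraded sense.

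For the open immersion under the affine-diagonal hypothesis: $\pi$ being affine implies $Y$ has affine diagonal, hence is separated. I will cover $Y$ by the nonvanishing loci $D(s_i)\subseteq Y$ for finitely many homogeneous $\kk$-algebra generators $s_1,\dots,s_m$ of $\SR(X)$: each $D(s_i)$ is affine and maps isomorphically to the basic open $D(s_i)\subseteq \Spec \SR(X)$, and these cover $Y$ because the common vanishing of enough semi-invariants on the torsor $Y_\stU$ is empty and this extends to $Y$ by codim $\geq 2$. The complement $\Spec\SR(X)\setminus\varphi(Y)$ then has codim $\geq 2$ because $\Gamma(\varphi(Y),\OO) = \SR(X) = \Gamma(\Spec\SR(X),\OO)$, and on the normal scheme $\Spec\SR(X)$ no strictly larger open subset with the same global sections is possible unless the complement has codim $\geq 2$. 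The main obstacle I foresee is exactly this covering step—producing enough homogeneous sections whose common vanishing on $Y$ is empty—and the key ingredient is the split-torus torsor structure on $Y_\stU$ combined with the codim-two extension.
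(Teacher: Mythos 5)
There are genuine gaps in three of your four steps. (1) \emph{Finite type}: you claim $\SS(X)$ is generated as an $\OO_X$-algebra by $\OO_X(\pm E_{e_i})$. For Weil divisors on a normal scheme the multiplication $\OO_X(E_k)\otimes\OO_X(E_{k'})\to\OO_X(E_{k+k'})$ is in general \emph{not} surjective at singular points (already on the quadric cone with $D$ a ruling one has $\OO_X(D)\cdot\OO_X(D)\subsetneq\OO_X(2D)$ at the vertex), so the algebra is not generated in degrees $\pm e_i$. Local finite generation of the sheaf of divisorial algebras genuinely uses $\QQ$-factoriality of $X$ (one passes to Cartier multiples); this is \cite[Prop.~1.3.2.3]{Coxrings}, which is what the paper invokes. (2) \emph{Codimension of $Y\setminus Y_\stU$}: you deduce $\codim(Y\setminus Y_\stU)\geq 2$ from ``$\pi$ affine of finite type with generic fibres of dimension $n$''. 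Fibre dimension is only upper semicontinuous, so fibres over the singular locus can have dimension $>n$ and the preimage of a codimension-$\geq 2$ set can a priori have codimension $1$; the correct statement is \cite[Prop.~1.3.2.8]{Coxrings} and requires the specific structure of $\SS(X)$. Since your proof of local factoriality rests on extending $\Cl(Y_\stU)=0$ across this complement, the gap propagates to that step as well.

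(3) \emph{The open immersion}: the loci $D(s_i)$ for a finite set of homogeneous algebra generators $s_i$ of $\SR(X)$ need not cover $Y$ — for $Y=\AA^2=\Spec\kk[x,y]$ the generators $x,y$ miss the origin — and affineness of $Y_{s_i}$ is not automatic before quasi-affineness of $Y$ is known, so the covering argument is circular. You correctly flag this as the main obstacle, but it is not repaired. For comparison, the paper's route is shorter and avoids all three issues: finite type and the UFD property are quoted from \cite[Prop.~1.3.2.3, Thm.~1.3.3.3]{Coxrings}; local factoriality is obtained by covering $X$ by affine opens $U$, observing that $\SS(X)|_U$ is again a sheaf of divisorial algebras for $U$ (as $\Cl(X)\to\Cl(U)$ is surjective), so that $\pi^{-1}(U)$ is a quasi-affine open in the spectrum of a UFD; the open immersion is \cite[Cor.~1.3.4.6]{Coxrings}; and codimension $\geq 2$ of the complement follows from the UFD property exactly as you sketch at the end (a height-one component is $V(g)$ with $g$ a non-unit, forcing $1/g\in H^0(\OO_V)=\SR(X)$, a contradiction). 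Your $\Cl(Y)=0$ approach to local factoriality is an attractive alternative, but only once (1) and (2) are properly established.
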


\begin{proof}
As $\SR(X)$ is constructed using a surjection $K_0 \onto \Cl(X)$, the ring $\SR(X)$ is a unique factorization domain by \cite[Thm. 1.3.3.3]{Coxrings}. Moreover, since $X$ is $\QQ$-factorial, $\Spec_X \SS(X)\to X$ is of finite type, see \cite[Proposition 1.3.2.3]{Coxrings}.

Now assume that $X$ has affine diagonal. By construction $V=\Spec_X \SS(X)$ and $Y=\Spec \SR(X)$ have the same global sections, because
\[
H^0(V,\OO_V)=H^0(X,\SS(X))=\SR(X).
\]
By \cite[Corollary 1.3.4.6]{Coxrings} the scheme  $V$ is quasi-affine, hence $V\into Y$ is an open immersion.

Assume by contradiction that the complement of $V$ in $Y$ has codimension lower than $2$. This means that there exists a prime ideal $P$ of $\SR(X)$ of height $1$ such that $P\notin V$. Since $\SR(X)$ is UFD, the prime $P$ is principal, say $P=(g)$, which implies that $V\subseteq \Spec \SR(X)_g$.
Since $H^0(\OO_V)=H^0(\OO_Y)=\SR(X)$, it would follow that $g$ is invertible, which is not the case.

Now we prove in general that $V = \Spec_X \SS(X)$ is a locally factorial scheme. 
When $X$ has affine diagonal, this follows from the fact that $V$ is an open subset of $\Spec \SR(X)$.
To see this, denote by $\pi\colon \Spec_X \SS(X)\to X$ the structure morphism.
If $U$ is an open affine subset of $X$ one has that $\SS(X)_{|U}$ is again a sheaf of divisorial algebras for $U$, because $\Cl(X)\to \Cl(U)$ is surjective. In particular
\[
\Spec_U (\SS(X)_{|U})=\pi^{-1}(U)
\]
is locally factorial as required.
\end{proof}

%
%

With \autoref{thm:rs} we can compare Mori dream spaces and stacks in the spirit of \cite[Prop. 2.9]{HM2}.

\begin{definition}
A \emph{Mori dream space} over $\kk$ is an integral and normal scheme $X$ of finite type over $\kk$ such that:
\begin{itemize}
\item $X$ is $\QQ$-factorial with affine diagonal (e.g. $X$ separated),
\item $H^0(X, \OO^*_X) = \kk^*$,
\item $\Cl(X) = \Refl_1(X)$ is finitely generated as a $\ZZ$-module,
\item its $\Refl_1$-Cox ring $\RefRR(X)$ is finitely generated as a $\kk$-algebra.
\end{itemize}
\end{definition}

\begin{remark}
This definition is slightly more general than in \cite{HM1,HM2}, where we assumed a Mori dream space to be separated.
\end{remark}

Note that for a Mori dream space its Cox sheaf $\RefRS(X)$ is well defined up to (graded) isomorphism. In particular it does not depend on the choices made in \autoref{sec:confrontation}. 

\begin{definition}
Let $X$ be a Mori dream space. We call 
\[ \Xcan \coloneqq
\stackquot{\Spec_X \RefRS(X)}{\D{\Cl(X)}} 
\] 
the \emph{canonical \MD-stack} of $X$. 
\end{definition}

\begin{lemma}
\label{lem:piocan}
Let $X$ be a Mori dream space, $U$ its regular locus and $\pi\colon \Xcan \to X$ be the structure morphisms. Then $\Xcan$ is a normal stack of finite type over $\kk$, $\Refl_1(\Xcan)=\Pic(\Xcan)$ and the complement of $U=\pi^{-1}(U)$ in $\Xcan$ has codimension at least $2$.
\end{lemma}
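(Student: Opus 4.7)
The plan is to exploit the identification $\Xcan \simeq [Y/\D{K_0}]$ from \autoref{thm:rs}, where $Y \coloneqq \Spec_X \SS(X)$ and $K_0$ is a free abelian group of \emph{finite} rank surjecting onto $\Cl(X)$ (possible because $X$ is a Mori dream space, so $\Cl(X)$ is finitely generated), and then to transfer the properties of $Y$ from \autoref{lem:Xcan-regular} through the $\D{K_0}$-torsor $p\colon Y \to \Xcan$. For the finite type and normality claims, \autoref{lem:Xcan-regular} gives that $Y$ is of finite type over $\kk$ and locally factorial (in particular normal); since $\D{K_0}$ is a finite-type torus, $\Xcan$ is of finite type over $\kk$, and since normality is smooth-local it descends from $Y$ to $\Xcan$ along the smooth surjection $p$.

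For the identification $\pi^{-1}(U)=U$, on the regular locus $U$ each reflexive sheaf $\OO_X(E_k)$ restricts to a line bundle, so $\SS(X)|_U$ is a $K_0$-graded $\OO_U$-algebra with invertible graded components. By \autoref{prop:torsor-diag} the restriction $Y|_U \to U$ is already a $\D{K_0}$-torsor, so $\pi^{-1}(U) = [Y|_U/\D{K_0}] \simeq U$ inside $\Xcan$.

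For the codimension claim, $\QQ$-factoriality of $X$ combined with finite generation of $\Cl(X)$ yields that $\Pic(X)$ has finite index in $\Cl(X)$; setting $K_0' \coloneqq (K_0\onto \Cl(X))^{-1}(\Pic(X))$ gives a finite index subgroup of $K_0$. The $K_0'$-graded subalgebra $\SS'(X) \coloneqq \bigoplus_{k\in K_0'}\OO_X(E_k)$ has invertible graded components (since each $E_k$ with $k\in K_0'$ has Cartier divisor class), so by \autoref{prop:torsor-diag} the map $\Spec_X \SS'(X) \to X$ is a $\D{K_0'}$-torsor, hence flat. Since the complement of $U$ in $X$ has codimension $\geq 2$ (by normality of $X$), codimension is preserved: first by flatness of $\Spec_X \SS'(X) \to X$, then by the finite surjection $Y \to \Spec_X \SS'(X)$ (finite because $K_0/K_0'$ is finite and $\SS(X)$ is a domain), and finally by the smooth surjection $p\colon Y \to \Xcan$. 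Combined with $\pi^{-1}(U)=U$, this gives the desired codimension bound.

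Finally, for $\Ref_1(\Xcan)=\Pic(\Xcan)$, given a rank-one reflexive sheaf $\shF$ on $\Xcan$, the pullback $p^*\shF$ is rank-one and reflexive on $Y$ by flatness of $p$, hence invertible by local factoriality of $Y$; since invertibility descends along the fppf cover $p$, the sheaf $\shF$ is itself a line bundle. The main obstacle is the codimension step: $Y\to X$ is not flat in general (the sheaves $\OO_X(E_k)$ need not be locally free along the singular locus of $X$), which forces one to factor through the intermediate flat scheme $\Spec_X \SS'(X)$ associated to the Cartier sublattice in order to transfer the codimension bound cleanly.
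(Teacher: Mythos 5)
Your proof is correct and, for most of the statement, follows the same strategy as the paper: present $\Xcan$ as $[\Spec_X\SS(X)/\D{K_0}]$ via \autoref{thm:rs} and transfer normality, finite type, local factoriality (hence $\Ref_1=\Pic$ by descent of invertibility along the smooth atlas) and the identification $\pi^{-1}(U)\simeq U$ from $Y=\Spec_X\SS(X)$, exactly as in \autoref{lem:Xcan-regular} and the paper's argument. The one place where you genuinely diverge is the codimension bound: the paper simply cites \cite[Proposition 1.3.2.8]{Coxrings} to get that the preimage of $U$ in $Y$ has complement of codimension at least $2$, whereas you reprove this from scratch by passing to the finite-index sublattice $K_0'=\ker(K_0\to\Cl(X)/\Pic(X))$, observing that $\Spec_X\SS'(X)\to X$ is a $\D{K_0'}$-torsor (hence smooth with equidimensional fibres, so codimension-preserving) and that $Y\to\Spec_X\SS'(X)$ is finite surjective between integral finite-type $\kk$-schemes of equal dimension (so codimension can only go up). This buys self-containedness at the cost of length; it is essentially the internal mechanism behind the cited ADHL proposition, and it also makes transparent exactly where $\QQ$-factoriality and finite generation of $\Cl(X)$ enter.

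Two small points to tighten. First, \autoref{prop:torsor-diag} characterises torsors by the multiplication maps $\gsR_g\otimes\gsR_h\to\gsR_{g+h}$ being isomorphisms, not by the graded pieces being invertible; so both for $\SS(X)|_U$ and for $\SS'(X)$ you should add the (easy) verification that for $k,k'$ in the Cartier sublattice the map $\OO_X(E_k)\otimes\OO_X(E_{k'})\to\OO_X(E_{k+k'})$ is an isomorphism — both sides are reflexive (an invertible sheaf tensored with a reflexive one is reflexive) and the map is an isomorphism in codimension one on the normal scheme $X$, hence everywhere. Second, the finiteness of $Y\to\Spec_X\SS'(X)$ deserves a word: each coset module $\bigoplus_{k'\in k+K_0'}\OO_X(E_{k'})$ equals $\OO_X(E_k)\otimes_{\OO_X}\SS'(X)$ by the same reflexivity argument, so $\SS(X)$ is a coherent $\SS'(X)$-module since $K_0/K_0'$ is finite (this is the same mechanism the paper uses in \autoref{prop:Xcan-proper}); the parenthetical ``because $\SS(X)$ is a domain'' is not the reason.
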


\begin{proof}
From \autoref{lem:Xcan-regular} we see that $\Xcan$ is normal and of finite type.

Recall that $\Cl(X)=\Refl_1(X)$ by \autoref{cor:ref-cl-same}.
Let $0 \to K_1 \to K_0 \xto{\alpha} \Cl(X) \to 0$ be a short exact sequence with $K_0, K_1$ free abelian groups.
By \autoref{thm:rs}, $\Xcan$ is isomorphic to the quotient stack
\[
\stackquot{\Spec_X \SS(X)}{\D{K_0}},
\]
where as before $\SS(X)$ is the sheaf of divisorial algebras appering in the construction of $\RefRR(X)$.

Since $\Spec_X \SS(X)$ is a locally factorial scheme by \autoref{lem:Xcan-regular}, by \autoref{cor:ref-cl-same} all reflexive sheaves of rank $1$ on $\Spec_X \SS(X)$ are invertible. In particular if we denote by $f\colon \Spec_X \SS(X)\to \Xcan$ the structure map, which is a smooth atlas, for all $\shL\in \Refl_1(\Xcan)$ the sheaves $f^*\shL$ and, by descent, $\shL$ itself are invertible. Thus $\Refl_1(\Xcan)=\Pic(\Xcan)$. 

Set $p\colon \Spec_X \SS(X)\to X$. As $\Refl_1(X)=\Pic(U)$, the restriction of $p$ to $U$ is a $\D{K_0}$-torsor and therefore $\pi^{-1}(U)\to U$ is an isomorphism. By \cite[Proposition 1.3.2.8]{Coxrings} the inverse image of $U$ on $\Spec_X \SS(X)$ has complement of codimension at least $2$. As $\Spec_X \SS(X)\to \Xcan$ is a smooth atlas, by definition we have that $U=\pi^{-1}(U)$ has complement in $\Xcan$ of codimension at least $2$.
\end{proof}

\begin{remark}
From \autoref{lem:Xcan-regular} we see that $\Xcan$ is not just normal, but it has a smooth atlas $\Spec_X \SS(X) \to \Xcan$ with $\Spec_X \SS(X)$ locally factorial. 

In general, this does not imply that $\Spec_X \SS(X)$ is smooth, in which case $\Xcan$ would be smooth and therefore the canonical stack of $X$  in the sense of \cite[\S 4.1]{Fantechi-etal}.

For instance consider $X = \Spec A$ with
\[
A = \kk[x_1,\dots,x_n]/(x_1^2+\cdots + x_n^2) \text{ for }n\geq 5.
\]
The algebra $A$ is a UFD by the Klein-Nagata theorem, but not regular because the Jacobian criterion fails. 
In particular $\Cl(X)=\Pic(X)=0$ and $H^0(X,\OO_X)^*=A^*=\kk^*$, since $A$ is a $\NN$-graded domain and therefore an invertible element of $A$ has to be homogeneous of degree $0$.

In conclusion $X$ is a Mori dream space and $\Xcan=X$ is not regular.
\end{remark}

\begin{theorem}
\label{thm:piccan}
Let $X$ be a Mori dream space and $\Xcan$ its canonical \MD-stack.
Then the structure morphism $\pi \colon \Xcan\to X$ induces maps
\[
\begin{tikzcd}[row sep=0]
\PPic(\Xcan)=\RRef_1(\Xcan) \ar[r, shift right] & \RRef_1(X) \ar[l, shift right] \\
\phantom{\PPic(\Xcan)=} \shF \ar[r, mapsto] & \pi_* \shF \\
\phantom{\PPic(\Xcan)=} (\pi^*\shG)^{\vee\vee} & \shG \ar[l, mapsto]
\end{tikzcd}
\]
which are well defined and quasi-inverses of each other.

In particular, we get that $H^0(\Xcan,\OO_{\Xcan}^*)=H^0(X,\OO_X)=\kk^*$. 
Moreover, 
we have
\[
\pi_* \PicRS(\Xcan) \simeq \RefRS(X) \text{ and } \PicRR(\Xcan) \simeq \RefRR(X).
\]
\end{theorem}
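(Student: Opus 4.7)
The strategy is to leverage \autoref{lem:piocan}, which exhibits an open $U \subseteq \Xcan$ (the preimage of the regular locus of $X$, which we also denote by $U$) isomorphic via $\pi$ to the regular locus of $X$ and whose complement in $\Xcan$ has codimension at least $2$. This reduces essentially every claim to the situation on $U$, where $\pi$ is an isomorphism and rank-$1$ reflexive sheaves are just line bundles.

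Writing $j_X\colon U\into X$ and $j_\stX\colon U\into \Xcan$ for the two open immersions, one has $\pi\circ j_\stX=j_X$. By \autoref{lem:restrictsmooth}, restriction to $U$ induces equivalences $\RRef_1(X)\simeq \PPic(U)\simeq \RRef_1(\Xcan)$ with quasi-inverses $j_{X*}$ and $j_{\stX*}$ respectively. For $\shF\in\RRef_1(\Xcan)$,
\[
\pi_*\shF \simeq \pi_* j_{\stX*}(\shF|_U) = j_{X*}(\shF|_U) \in \RRef_1(X),
\]
so $\pi_*$ lands where claimed. Conversely $(\pi^*\shG)^{\vee\vee}$ restricts on $U$ to the line bundle $\shG|_U$ and is reflexive on $\Xcan$, hence equals $j_{\stX*}(\shG|_U)\in\RRef_1(\Xcan)$. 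Both constructions are quasi-inverse to restriction to $U$, so they are quasi-inverse to each other. For the global units, since the structure sheaf is reflexive of rank $1$ on either normal space, restriction yields $H^0(\Xcan,\OO_\Xcan^*)=H^0(U,\OO_U^*)=H^0(X,\OO_X^*)=\kk^*$ by the Mori dream assumption.

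For the Cox sheaves, the equivalence above identifies the groups $\Pic(\Xcan)=\Ref_1(\Xcan)$ with $\Ref_1(X)$. The plan is to push the $\Pic(\Xcan)$-family underlying $\PicRS(\Xcan)$ forward along $\pi$ and recognise it as a $\Ref_1(X)$-family on $X$: on $U$ this is transparent because $\pi|_U$ is an isomorphism, and \autoref{cor:restriction} then extends the family structure uniquely across the codimension-$2$ complement to produce a $\Ref_1(X)$-family on $X$ whose associated Cox sheaf of algebras is $\pi_*\PicRS(\Xcan)$. By \autoref{prop:picref} applied with $H^0(\OO_X^*)=\kk^*$, the $\Ref_1$-Cox sheaf is unique up to graded isomorphism, so $\pi_*\PicRS(\Xcan)\simeq \RefRS(X)$. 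Taking $H^0$ and using quasi-compactness of $\Xcan$ (from \autoref{lem:piocan}) to identify $\PicRR(\Xcan)$ with $H^0(\PicRS(\Xcan))$ yields $\PicRR(\Xcan)\simeq \RefRR(X)$.

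The main technical obstacle is verifying that $\pi_*$ genuinely produces a $\Ref_1(X)$-family in the sense of \autoref{def:family-sheaves}, that is, that the pushforwards of the multiplication maps induce the correct reflexive multiplication on $X$. The cleanest route is to check this after restricting to $U$, where $\pi$ is an isomorphism and the family structure is manifestly preserved, and then invoke \autoref{cor:restriction} to transport the family structure uniquely back to $X$.
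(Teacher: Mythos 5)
Your proposal is correct and follows essentially the same route as the paper: reduce to the common open locus $U$ of codimension-at-least-$2$ complement via \autoref{lem:piocan}, use \autoref{lem:restrictsmooth} to identify $\RRef_1(\Xcan)$ and $\RRef_1(X)$ with $\PPic(U)$ through restriction and pushforward, and invoke \autoref{cor:restriction} (plus uniqueness of the $\Ref_1$-Cox sheaf when $H^0(\OO_X^*)=\kk^*$) for the comparison of Cox sheaves. Your write-up simply makes explicit some steps the paper leaves terse.
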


\begin{proof}
Let $i\colon U=\pi^{-1}(U) \to \Xcan$ be the open immersion and $j=\pi i \colon U\to X$ the inclusion.

The equality $\Refl_1(\Xcan)=\Pic(\Xcan)$ and the fact that $U$ has complement of codimension at least $2$ in $\Xcan$ follow from \autoref{lem:piocan}.

From \autoref{lem:restrictsmooth} we have that 
\[
i_* \colon \RRef_1(U)\to \RRef_1(\Xcan) \text{ and } j_* \colon \RRef_1(U)\to \RRef_1(X)
\]
are equivalences, quasi-inverses of the corresponding restrictions. It follows that
\[
\pi_* \colon \RRef_1(\Xcan) \to \RRef_1(X)
\]
is an equivalence. Moreover if $\shG\in \RRef_1(X)$ then $(\pi^*\shG)^{\vee\vee}$ is a reflexive sheaf whose restriction to $U$ is $\shG_{|U}$. 

The last part of the statement follows from \autoref{cor:restriction}.
\end{proof}

%

\begin{remark}
\label{rem:generaliseHM2}
As for any pseudo-algebraic fibered category, we find that
\[
\Xcan = \stackquot{\Spec_{\Xcan} \PicRS(\Xcan)}{\D{\Pic(\Xcan)}}.
\]
So by its very definition, we get that $\Spec_{\Xcan} \PicRS(\Xcan) = \Spec_X \RefRS(X)$.
Moreover, by \cite[Prop. 1.6.3.3]{Coxrings}, $\Spec_X \RefRS(X) \into \Spec \RefRR(X)$ is an open immersion, whose complement is given by the \emph{irrelevant ideal} which is of codimension at least 2.

Additionally by \autoref{prop:Xcan-proper}, the map $\Xcan \to X$ is proper. 
Hence if $X$ is separated, so is $\Xcan$.

Therefore, by the last part of \autoref{thm:piccan}, we get for separated $X$ that $\Xcan$ is a \MD-quotient stack in the sense of \cite{HM2}, 
under the hypothesis that $H^0(\Spec \PicRR(\Xcan),\OO^*) = H^0(\Spec \RefRR(X),\OO^*) = \kk^*$.

This was shown already in \cite[Prop. 2.9]{HM2} with two additional assumptions:
namely that $H^0(\Spec \RefRR(X),\OO^*) = \kk^*$ (again) and $\Pic(\Spec \RefRR(X)) = 0$.

So \autoref{thm:piccan} allows to weaken the definition of \MD-quotient stack $\stX$ in \cite{HM2}: instead of asking for $H^0(\Spec \PicRR(\stX),\OO^*) = \kk^*$, the weaker condition $H^0(\stX,\OO^*)=\kk^*$ is sufficient.

As another consequence, the assumptions on $\Spec \RefRR(X)$ can be dropped in all statements in \cite{HM2}, which involve \cite[Prop. 2.9]{HM2}.
Most notable in the main theorem \cite[Thm. 3.2]{HM2} about lifting maps between Mori dream spaces, and \cite[Thm. 4.3]{HM2} where a classification of Mori dream quotient stacks is given in terms of root constructions (which is already a generalisation of the corresponding result in \cite{HM1}).

\end{remark}


\addtocontents{toc}{\protect\setcounter{tocdepth}{-1}}
\addtocontents{toc}{\protect\setcounter{tocdepth}{-1}}

\end{document}